\documentclass[12pt]{amsart}
\usepackage{lipsum}
\usepackage{amssymb}
\usepackage{enumerate}
\usepackage{amsthm}
\usepackage{tikz-cd}
\usepackage{tcolorbox}
\usepackage{amsmath}
\usepackage[mathscr]{euscript}
\usepackage[utf8]{inputenc}
\usepackage[english]{babel}
\usepackage{hyperref}
\hypersetup{
    colorlinks=true,
    linkcolor=blue,
    citecolor=red,
    filecolor=red,
    urlcolor=violet,
}

\makeatletter
\@namedef{subjclassname@2020}{%
  \textup{2020} Mathematics Subject Classification}
\makeatother
\newtheorem{thm}{Theorem}[section]
\newtheorem{lem}[thm]{Lemma}
\newtheorem{prop}[thm]{Proposition}
\newtheorem{defi}[thm]{Definition}

\newtheorem{corl}[thm]{Corollary}
\newtheorem{xrem}[thm]{Remark}
\newtheorem{exm}[thm]{Example}
\newtheorem{conj}[thm]{Conjecture}

\DeclareMathOperator{\rank}{{rank}}

\DeclareMathOperator{\HN}{{HN}}
\DeclareMathOperator{\gr}{{gr}}

\DeclareMathOperator{\Gr}{{Gr}}
\DeclareMathOperator{\mult}{{mult}}
\DeclareMathOperator{\Sym}{{Sym}}
\DeclareMathOperator{\Nef}{{Nef}}
\DeclareMathOperator{\NE}{{NE}}

\begin{document}
\baselineskip=16pt

\date{\today}
\subjclass[2020]{14J60, 14H60}
\keywords{Higgs bundles, Seshadri constants, ampleness, numerical effectiveness}
\author{Krishna Hanumanthu}
\author{Snehajit Misra}
\author{Nabanita Ray}

\address{Chennai Mathematical Institute, H1 SIPCOT IT Park, Siruseri, Kelambakkam 603103, India}
\email[Krishna Hanumanthu]{krishna@cmi.ac.in}
\address{Department of Mathematics and Computing, Indian Institute of Technology (Indian School of Mines) Dhanbad, Jharkhand - 826004, India}
\email[Snehajit Misra]{snehajitm@iitism.ac.in}
\address{Indraprastha Institute of Information Technology, Delhi, Okhla Industrial Estate, Phase III,
New Delhi, Delhi 110020, India}
\email[Nabanita Ray]{nabanita@iiitd.ac.in}

\begin{abstract}
We define Seshadri constants for Higgs bundles on smooth projective varieties over algebraically closed fields of characteristic zero. This definition is inspired by and  analogous to the notion of Seshadri constants for ordinary vector bundles. We prove a series of properties of Higgs Seshadri constants which are analogous to the corresponding properties in the case of ordinary Seshadri constants. In particular, we prove a Seshadri criterion for Higgs ampleness and prove that Higgs Seshadri constants can be computed by restriction to curves. 
\end{abstract}

\title{Seshadri constants of Higgs Vector bundles}
\maketitle

\vskip 4mm

\section{Introduction}\label{intro}
Seshadri constants were defined by Demailly \cite{Dem90} as local invariants to study the separation of jets of line bundles on complex projective varieties. Demailly's motivation came from an ampleness criterion due to Seshadri \cite{Har70} and the immediate purpose was to study the Fujita Conjecture about positivity of adjoint line bundles. This notion has been extensively studied, and Seshadri constants have turned out to have fundamental significance to the geometry of algebraic varieties, especially in connection with positivity aspects. As part of this extensive study, they have been defined for vector bundles of arbitrary rank by Hacon \cite{Hac00} and, more recently,  for vector bundles with additional structure such as parabolic bundles \cite{BHMR23}. 
In this paper, we continue this study by defining Seshadri constants  for another important case, that of Higgs vector bundles. 

%1) Briefly recall the definition of Seshadri constants and basic properties and references. 

We recall below the classical notion of Seshadri constants of line bundles introduced by Demailly. 

\begin{defi} [Seshadri constant of a line bundle at a point]
		Let $X$ be a projective variety and let $L$ be a nef line bundle on $X$. Then for  $x \in X$, the real number
		\begin{equation*}
			\varepsilon(L;x) := \inf\limits_{x\in C \subseteq X} \Bigl\{ \frac{L \cdot C}{\rm{mult}_x C} \Bigr\}
		\end{equation*}
		is called the \textit{Seshadri constant} of $L$ at $x$, where the infimum is taken over all irreducible and reduced curves $C$ such that $x \in C$.
	\end{defi}

The Seshadri criterion for ampleness says that a nef line bundle $L$ is ample if and only if 
$\inf\limits_{x\in X} \varepsilon(L;x) > 0$.  Seshadri constants of ample line bundles can be arbitrarily small positive numbers, as shown by an example of Miranda; see \cite{EL93}.

Seshadri constants of line bundles have been studied extensively, especially on algebraic surfaces. For a survey, see \cite{primer}.

%2) Briefly recall the vector bundle case. 

Since the notions of nefness and ampleness generalize to vector bundles of arbitrary rank, it is natural to ask whether Seshadri constants can be defined in more generality. 
This was originally done by Hacon \cite{Hac00} and later expanded by Fulger and Murayama \cite{FM21} who generalized the notion to include the relative case. We recall their 
definition below. 

\begin{defi}\rm
 Let $\rho : Y\longrightarrow X$ be a morphism of projective schemes defined over an algebraically closed field and let $x\in X$ be a closed point in $X$. Let $\mathcal{C}_{\rho,x}$ be the set of all irreducible curves on $Y$ that intersect the fiber $\rho^{-1}(x)$, but are not contained in the support of $\rho^{-1}(x).$
 %We say a line bundle $\xi$ is $\rho$-nef (respectively, $\rho$-ample) if $\xi\vert_{\rho^{-1}(x)}$ is nef (respectively, ample) for all $x\in X$.

    The Seshadri constant of a nef line bundle $\xi$ on $Y$ at a point $x$ in $X$ is defined as
    $$\varepsilon(\xi\, ;\, x) = \inf_{C\in \mathcal{C}_{\rho,x}}\Bigl\{ \dfrac{\xi\cdot C}{\mult_x\rho_*C}\Bigr\},$$
\end{defi}
where the infimum is taken over all irreducible curves $C\in \mathcal{C}_{\rho,x}.$

If $E$ is a vector bundle on $X$, then the Seshadri constant $\varepsilon(E;\, x)$ of $E$ at $x \in X$ is defined as $\varepsilon(\xi\, ;\, x)$, where 
$\xi =\mathcal{O}_Y(1)$
is the Serre line bundle 
on the projective bundle
$Y=\mathbb{P}_X(E)$
over $X$ associated to $E$.

Although Seshadri constants of vector bundles are an active topic of study, much less is known about them in comparison with the line bundle case. See \cite{BHM24} for some recent results. 

%3) Describe the contents of the paper. 

In this paper, we continue this study by extending the notion of Seshadri constants to the case of \textit{Higgs} vector bundles.  A Higgs bundle on a smooth complex projective variety  is a natural generalization of a vector bundle, equipped with  additional structure called a \textit{Higgs field}. Higgs vector bundles were first introduced by Hitchin in connection with the self-duality equations on a Riemann surface (see \cite{Hi87a} and \cite{Hi87b}).   Subsequently, Simpson extended this notion to higher-dimensional varieties via his investigation of variations of Hodge structures (see \cite{S88}). Simpson extended the results of Hitchin to Higgs bundles over higher-dimensional complex projective manifolds. A Higgs bundle comes equipped with a natural stability condition (see Definition \ref{defn1}) which allows one to study the moduli spaces of  Higgs bundles. The study of  Higgs bundles is central in
variations of Hodge structure, the study of degenerations, non-abelian Hodge correspondence 
and in the geometric Langlands program.

In Section \ref{prelims}, we recall the definition and important properties of Higgs bundles. The material in this section is well-known, but we include relevant results for the convenience of the reader. 

In Section \ref{Higgs-ample}, we recall the notions of nefness and ampleness which were introduced earlier in the literature. Most (but not all) results in this section are known and we include them only for completeness. 

In Section \ref{sc}, we introduce Seshadri constants of Higgs bundles, which we call \textit{Higgs Seshadri constants}. We prove several results about them analogous to the usual Seshadri constants of vector bundles. In particular, we prove a Seshadri criterion for Higgs ampleness (Theorem \ref{ses-criteria}). 
In Theorem \ref{thm-curves},
we show that the Higgs Seshadri constant of a Higgs bundle on smooth curves is equal to the minimum Higgs slope of the Higgs bundle. This is analogous to a result proved by Hacon in  the usual vector bundle case. 
Again, analogously to the usual vector bundle case, we show that the Higgs Seshadri constants can be computed by restriction to curves (Theorem \ref{thm-res-curve}). 

In Section \ref{properties}, we continue 
the study of Higgs Seshadri constants by proving several more results about them. We compute the Higgs Seshadri constants on ruled surfaces in Corollary \ref{5.5} and Proposition \ref{5.7}. Among other results, 
we study the behaviour of Higgs Seshadri constants for tensor products (Theorem \ref{tensor}), symmetric powers (Theorem \ref{symmetric}) and their behaviour in exact sequences (Corollary \ref{exact}). We also prove a Miranda type result (Theorem \ref{miranda-exm}) which shows that Higgs Seshadri constants can be arbitrarily small.

We also give many examples, comparing the usual and Higgs Seshadri constants and illustrating the results we prove about Higgs Seshadri constants. 

We work throughout over an algebraically closed field of characteristic zero.

\section{Preliminaries}\label{prelims}
Let $X$ be a smooth projective variety defined over an algebraically closed field 
%$\mathbb{K}$ 
of characteristic zero.  A \textit{Higgs sheaf} on $X$ is a pair $\mathcal{E} = (E,\theta)$, where $E$ is a coherent sheaf on $X$ and $\theta : E\longrightarrow E\otimes \Omega^1_X$ is an $\mathcal{O}_X$-module morphism such that the composition 
$$\theta \wedge \theta  : E \xrightarrow{\theta} E \otimes \Omega^1_X\xrightarrow{\theta\otimes id} E\otimes \Omega^1_X \otimes \Omega_X^1 \rightarrow E\otimes \Omega^2_X $$
is zero. The $\mathcal{O}_X$-module map $\theta$ is called the \textit{Higgs field} of the Higgs sheaf $\mathcal{E}$. A Higgs sheaf $\mathcal{E} = (E,\theta)$ is called a \textit{Higgs bundle} if $E$ is locally free. For a map $\phi : Y \longrightarrow X$ between two smooth projective varieties $X$ and $Y$, and a Higgs bundle $\mathcal{E}=(E,\theta)$ on $X$, its pullback $\phi^*\mathcal{E}$ under $\phi$ is defined as the Higgs bundle
 $(\phi^*E,\phi^*\theta)$ on $Y$, where $\phi^*\theta$ is the composition map
 \begin{align*}
  \phi^*\theta :\phi^*E\longrightarrow \phi^*E \otimes \phi^*\Omega^1_X \longrightarrow \phi^*E\otimes \Omega^1_Y.
 \end{align*}
 
  If $\mathcal{E}=(E,\theta)$ and $\mathcal{G}=(G,\phi)$ are Higgs sheaves on $X$, 
   a morphism $f:(E,\theta)\longrightarrow(G,\phi)$ is a morphism of $\mathcal{O}_X$-modules $f:E\longrightarrow G$ such that the following diagram commutes:
\begin{center}
 \begin{tikzcd}
E \arrow[r, "f"] \arrow[d, "\theta"]
& G \arrow[d, "\phi" ] \\
E \otimes\Omega^1_X \arrow[r, "f\otimes id" ]
& G\otimes\Omega^1_X
\end{tikzcd}
\end{center}

The Higgs sheaves $\mathcal{E}$ and $\mathcal{G}$ are said to isomorphic if there is a morphism $f : \mathcal{E} \longrightarrow \mathcal{G}$ such that $f$ is an isomorphism
as an $\mathcal{O}_X$-module morphism.

For two Higgs bundles $\mathcal{E} = (E,\theta)$ and $\mathcal{F}=(F,\phi)$ on $X$,  their tensor product is also a Higgs bundle defined in the following way: we define the map $$\theta\otimes \psi : E\otimes F \longrightarrow (E\otimes F)\otimes \Omega_X^1$$ as follows:  for any local sections $u$ and $v$ of $E$ and $F$ respectively, set $$(\theta\otimes \phi)(u\otimes v) = \theta(u)\otimes v + u \otimes \phi(v).$$

In particular, $\mathcal{E}\otimes L$ is a Higgs bundle for any line bundle $L$ by considering the trivial Higgs field 
on the line bundle $L$.

For any integer $k > 1$,  $$ \otimes^k \theta : E^{\otimes k} \longrightarrow E^{\otimes k} \otimes \Omega^1_X$$
is defined as $\otimes^k \theta = \theta \otimes \theta \otimes \cdots \otimes \theta $, iterated $k$-times.  The resulting Higgs bundle is denoted by $\mathcal{E}^{\otimes k}$.  A Higgs field $\theta$ is \it nilpotent \rm if 
 $\otimes^k \theta = 0$ for some $k$.

The Higgs field  $\otimes^k \theta $ induces the morphisms  $$\Sym^k\theta : \Sym^kE \longrightarrow \Sym^kE \otimes \Omega^1_X.$$ This gives a Higgs field on the symmetric power $\Sym^kE$, and the resulting Higgs bundle  is denoted by $\Sym^k\mathcal{E} = (\Sym^kE, \Sym^k\theta).$

\subsection{Higgs Grassmann scheme}
Let $\mathcal{E} = (E,\theta)$ be a Higgs bundle of rank $r$ on $X$ (i.e., $E$ is a vector bundle of rank $r$) and let $1\leq k \leq r-1$ be an integer. Let $p_k: \text{Gr}_k(E) \longrightarrow X$ be the Grassmann bundle parametrizing locally free quotients of $E$ of rank $k$. We consider the following exact sequence:
\begin{align}\label{seq-Grassmann}
 0 \longrightarrow S_{E,r-k} \xrightarrow{\psi} p_k^*E \xrightarrow{\eta}Q_{E,k}\longrightarrow 0,
\end{align}
where  $S_{E,r-k}$ is the universal rank $r-k$ subbundle of $p_k^*E$ and $Q_{E,k}$ is the universal rank $k$ quotient of rank $k$. 

The \textit{Higgs Grassmann scheme}
corresponding to the Higgs bundle $\mathcal{E}=(E,\theta)$, denoted $\mathcal{G}r_k(\mathcal{E})$, is a 
closed subscheme of $\text{Gr}_k(E)$
defined as the zero loci in $\text{Gr}_k(E)$ of the composite morphism
%defined the closed subscheme $\mathcal{G}r_k(\mathcal{E}) \subseteq \text{Gr}_k(E)$ (called as \it Higgs Grassmann schemes\rm) is defined as the zero loci of the composite morphisms 
$$\bigl(\eta \circ \text{Id}_{\Omega^1_X}\bigr) \circ p_k^*(\theta)\circ \psi : S_{E,r-k} \longrightarrow Q_{E,k} \otimes p_k^*\Omega^1_X.$$

Restricting  (\ref{seq-Grassmann}) to the Higgs Grassmann scheme $\mathcal{G}r_k(\mathcal{E})$, we have the following universal short exact sequence
\begin{align}\label{seq1}
 0 \longrightarrow \mathcal{S}_{\mathcal{E},r-k} \xrightarrow{\psi} \rho_k^*\mathcal{E} \xrightarrow{\eta}\mathcal{Q}_{\mathcal{E},k}\longrightarrow 0.
\end{align}
Here $\rho_k = p_k\vert_{\mathcal{G}r_k(\mathcal{E})}$, and the Higgs vector bundle $\mathcal{Q}_{\mathcal{E},k} = Q_{E,k}\vert_{\mathcal{G}r_k(\mathcal{E})}$ is equipped with the quotient Higgs field induced by $\rho_k^*\theta.$ The Higgs Grassmann scheme 
$\mathcal{G}r_k(\mathcal{E})$ has the following universal properties:
\vspace{2mm}

    For any morphism $f:Y\longrightarrow X$ and any rank $k$ locally free Higgs quotient $\mathcal{F}$ of $f^*\mathcal{E}$, there is  a morphism $\psi_k:Y\longrightarrow \mathcal{G}r_k(\mathcal{E})$ over $X$ such that $\mathcal{F} = \psi_k^*\mathcal{Q}_{\mathcal{E},k}.$
    Conversely, if such a map $\psi_k:Y\longrightarrow \mathcal{G}r_k(\mathcal{E})$ exists, then there is a map $f : Y\longrightarrow X$ with $\mathcal{F} = \psi_k^*\mathcal{Q}_{\mathcal{E},k}$ a locally free Higgs quotient of $f^*\mathcal{E}$  of rank $k$. 
\vspace{3mm}

For a morphism of varieties $f:Y\longrightarrow X,$ the morphism $\psi_k: Y \longrightarrow \text{Gr}_k(E)$ factors through the Higgs Grassmann scheme
$\mathcal{G}r_k(\mathcal{E})$ if and only if $\theta$ induces a Higgs field on  the universal quotient bundle $\text{Q}_{E,k}$ on $\text{Gr}_{k}(E)$.
\vspace{2mm}

For each $1\leq k \leq r-1,$ we denote the line bundle $\mathcal{O}_{\text{Gr}_k(E)}(1)\vert_{\mathcal{G}r_k(\mathcal{E})}$ by $\mathcal{O}_{\mathcal{G}r(\mathcal{E})}(1).$  For each $k$, we will simply write $\mathcal{Q}_k$ instead of $\mathcal{Q}_{\mathcal{E},k}$ when the Higgs bundle $\mathcal{E}$ is clear from the context.

\subsection{Semi-stability of Higgs vector bundles}  Let $X$ be a a polarized smooth projective variety of dimension $n$ together with a fixed ample line bundle $L$ on it. For a torsion-free Higgs sheaf $\mathcal{E} = (E,\theta)$ on $X$, its slope $\mu_L(\mathcal{E})$ with respect to $L$ is defined as $$\mu_L(\mathcal{E}) := \mu_L(E) = \dfrac{c_1(E)\cdot H^{n-1}}{\rank(E)}.$$
\begin{defi}\label{defn1}
 \rm A  torsion-free Higgs sheaf  $\mathcal{E} = (E,\theta)$ on a smooth projective variety $X$ is said to be \textit{slope Higgs semistable} (respectively, \textit{slope Higgs stable}) with respect to a fixed ample line bundle $L$ on $X$ if  for every $\theta$-invariant proper subsheaf  $\mathcal{G}$ of $\mathcal{E}$ (i.e., $\theta(\mathcal{G}) \subset \mathcal{G}\otimes \Omega_X^1)$, one has
 \begin{center}
  $\mu_L(\mathcal{G})\leq \mu_L(\mathcal{E})$ (respectively, $\mu(\mathcal{G}) < \mu(\mathcal{E}))$.
 \end{center}
A torsion-free Higgs sheaf is called \textit{Higgs unstable} if it is not Higgs semistable.
\end{defi}  
  In fact,  in Definition \ref{defn1} of semistability (respectively, stability) for a Higgs sheaf $\mathcal{E}=(E,\theta)$, it is enough to consider $\theta$-invariant proper subsheaves $\mathcal{G}$ of $\mathcal{E}$  for which the quotient $\mathcal{E}/\mathcal{G}$ is a torsion-free Higgs sheaf.

  When the Higgs field is zero in Definition \ref{defn1}, we recover the usual semistability (respectively, stability) of an ordinary torsion-free sheaf. 
  
  For a  Higgs bundle $\mathcal{E} = (E,\theta)$ on a smooth polarized projective variety $X$ with fixed polarization $L$, there is a unique filtration of  $\theta$-invariant subbundles of $\mathcal{E}$
 \begin{align*}
 0=\mathcal{E}_d\subsetneq \mathcal{E}_{d-1}\subsetneq \cdots \subsetneq \mathcal{E}_1\subsetneq \mathcal{E}_0 = \mathcal{E}
 \end{align*}
such that each Higgs quotient sheaf $\gr^{\HN}_i(\mathcal{E}) = \bigl(\mathcal{E}_i/\mathcal{E}_{i+1},\overline{\theta}\vert_{\mathcal{E}_
{i}}\bigr)$  is a slope Higgs semistable Higgs sheaf with respect to $L$, and 
$$\mu_L(\mathcal{E}_{d-1})> \mu_L(\mathcal{E}_{d-2}/\mathcal{E}_{d-1})>\cdots \cdots> \mu_L(\mathcal{E}_0/\mathcal{E}_1).$$
 Such a filtration is called the\textit{ Higgs Harder-Narasimhan filtration} of $\mathcal{E}$ with respect to $L$.
We define $$\mu^H_{\max}(\mathcal{E}) := \mu_L(\mathcal{E}_{d-1})\hspace{3mm} \text{and} \hspace{2mm} \mu^H_{\min}(\mathcal{E}):= \mu_L(\mathcal{E}_0/\mathcal{E}_1).$$
Note that $$\mu^H_{\max}(\mathcal{E}) \geq \mu_L(\mathcal{E}) \geq \mu^H_{\min}(\mathcal{E})$$ and equality holds if and only if $\mathcal{E}$ is slope semistable with respect to $L$. Note that 
\begin{center}
$\mu^H_{\min}(\mathcal{E}) = \min\bigl\{\mu_L(\mathcal{Q})\mid \mathcal{E} \longrightarrow \mathcal{Q} \longrightarrow 0,\, \mathcal{Q} \neq 0, \,\ \text{$\mathcal{Q}$ is a Higgs quotient}\bigr\}.$
\end{center}
\begin{lem}\label{lem2.2}
 Let $$0\,\longrightarrow\, \mathcal{E}_1\,\longrightarrow\, \mathcal{E} \,\longrightarrow\, \mathcal{E}_2 \,\longrightarrow\, 0$$ be a short exact sequence of Higgs vector bundles over a smooth projective curve $X$, where $\mathcal{E}_1\,=\,(E_1,\,\theta_1)$ is an invariant Higgs subbundle of $\mathcal{E} = (E,\theta)$ and 
    $\mathcal{E}_2\,=\,(E_2,\,\theta_2)$ is the corresponding Higgs quotient. Then $$\mu_{\min}^H(\mathcal{E}_2) \geq \mu^H_{\min}(\mathcal{E}) \geq \min\Bigl\{ \mu_{\min}^H(\mathcal{E}_1),\ \mu_{\min}^H(\mathcal{E}_2)\Bigr\}.$$
    \begin{proof}

    Let $\mathcal{Q}'$ be the unique Higgs quotient of $\mathcal{E}_2$ having minimum slope among all Higgs quotients of $\mathcal{E}_2.$ Now every Higgs quotient of $\mathcal{E}_2$ is also a Higgs quotient of $\mathcal{E}.$ Thus by the definition of $\mu^H_{\min}$, we conclude
    $$\mu^H_{\min}(\mathcal{E}_2) = \mu(\mathcal{Q}') \geq \mu^H_{\min}(\mathcal{E}).$$
         
         Let $\mathcal{E} \longrightarrow \mathcal{Q} \longrightarrow 0$  be the Higgs semistable Higgs quotient of minimal slope in the Higgs Harder–Narasimhan filtration of the Higgs bundle $\mathcal{E}.$ 
         
         Now consider the map $\mathcal{E}_1 \longrightarrow \mathcal{E} \longrightarrow \mathcal{Q}.$ If this induced map  $\mathcal{E}_1 \longrightarrow \mathcal{E} \longrightarrow \mathcal{Q}$ is non-zero, then  its image has slope at most $\mu(\mathcal{Q}) = \mu^H_{\min}(\mathcal{E}).$  If the induced map  $\mathcal{E}_1 \longrightarrow \mathcal{E} \longrightarrow \mathcal{Q}$ is zero, then we have a non-zero map $\mathcal{E}_2 \longrightarrow \mathcal{Q}.$ Again, using the Higgs semistability of $\mathcal{Q}$, we have $\mu_{\min}^H(\mathcal{E}_2)  \leq \mu(\mathcal{Q}) = \mu^H_{\min}(\mathcal{E}).$

         Therefore $$\mu^H_{\min}(\mathcal{E}) \geq \min\Bigl\{ \mu_{\min}^H(\mathcal{E}_1),\ \mu_{\min}^H(\mathcal{E}_2)\Bigr\}.$$
    \end{proof}
\end{lem}
\begin{lem}\label{lem3.3}
    If $\mathcal{E}$ and $\mathcal{F}$ are two Higgs bundles over a smooth curve $C$, then the following hold:
    \begin{enumerate}
        \item $\mu^H_{\min}(\mathcal{E}\otimes \mathcal{F}) = \mu_{\min}^H(\mathcal{E})+ \mu_{\min}^H(\mathcal{F}).$
        \item $\mu_{\min}^H(\mathcal{E}^{\otimes k})= k\, \mu_{\min}^H(\mathcal{E})$ for any non-negative integer $k$.
        \item $\mu^H_{\min}(\Sym^k\mathcal{E}) = k\,  \mu_{\min}^H(\mathcal{E})$ for any non-negative integer $k$.
    \end{enumerate}
    \begin{proof} 
    \begin{enumerate}
        \item We consider the following three cases:
    
     \bf Case (a): \rm If both the Higgs bundles $\mathcal{E}$ and $\mathcal{F}$ are Higgs semistable, then by \cite[Corollary 3.8]{S92} their tensor product is also Higgs semistable. Hence in this case, we get $$\mu^H_{\min}(\mathcal{E}\otimes \mathcal{F}) = \mu(\mathcal{E}\otimes \mathcal{F}) = \mu(\mathcal{E})+\mu(\mathcal{F}) = \mu_{\min}^H(\mathcal{E})+ \mu_{\min}^H(\mathcal{F}).$$ 

     \bf Case (b): \rm Suppose that $\mathcal{E}$ is semistable and  $\mathcal{F}$ is unstable, and let
     \begin{align*}
 0=\mathcal{F}_h\subsetneq \mathcal{F}_{h-1}\subsetneq \mathcal{F}_{h-2}\subsetneq\cdots\subsetneq \mathcal{F}_{1} \subsetneq \mathcal{F}_0 = \mathcal{F}
\end{align*}
 be the Higgs Harder-Narasimhan filtrations for $\mathcal{F}.$ Then, since $\mathcal{E}$ is semistable, the Harder-Narasimhan filtration of $\mathcal{E}\otimes \mathcal{F}$ is given as follows:
  \begin{align*}
 0=\mathcal{E}\otimes \mathcal{F}_h\subsetneq \mathcal{E}\otimes\mathcal{F}_{h-1}\subsetneq \mathcal{E}\otimes \mathcal{F}_{h-2}\subsetneq\cdots\subsetneq \mathcal{E}\otimes \mathcal{F}_{1} \subsetneq \mathcal{E}\otimes \mathcal{F}_0 = \mathcal{E}\otimes \mathcal{F}. 
\end{align*}
Thus $\mu^H_{\min}(\mathcal{E}\otimes \mathcal{F}) = \mu^H_{\min}\bigl((\mathcal{E}\otimes \mathcal{F}_0)/(\mathcal{E}\otimes \mathcal{F}_1)\bigr) = \mu^H_{\min}\bigl(\mathcal{E}\otimes (\mathcal{F}_0/\mathcal{F}_1)\bigr).$
Now $\mathcal{F}_0/\mathcal{F}_1$ is semistable, and hence by Case (a), we conclude:
$$\mu^H_{\min}(\mathcal{E}\otimes \mathcal{F}) = \mu^H_{\min}\bigl(\mathcal{E}\otimes (\mathcal{F}_0/\mathcal{F}_1)\bigr) = \mu^H_{\min}(\mathcal{E}) + \mu_{\min}^H(\mathcal{F}_0/\mathcal{F}_1) = \mu^H_{\min}(\mathcal{E}) + \mu^H_{\min}(\mathcal{F}).$$

\bf Case (c): \rm Suppose that both $\mathcal{E}$ and $\mathcal{F}$ are unstable, and let
\begin{align*}
 0=\mathcal{E}_d\subsetneq \mathcal{E}_{d-1}\subsetneq \cdots \subsetneq \mathcal{E}_1\subsetneq \mathcal{E}_0 = \mathcal{E}
\end{align*}
be the Harder-Narasimhan filtration of $\mathcal{E}.$
We consider the following short exact sequence:
\begin{align}\label{seq2}
0\longrightarrow \mathcal{E}_{d-1}\otimes \mathcal{F} \longrightarrow \mathcal{E}_{d-2}\otimes \mathcal{F} \longrightarrow (\mathcal{E}_{d-2}/\mathcal{E}_{d-1})\otimes \mathcal{F} \longrightarrow 0.
\end{align}
The Higgs bundle $\mathcal{E}_{d-1}$ is Higgs semistable, and thus we have by Case (b)
$$\mu^H_{\min}(\mathcal{E}_{d-1}\otimes \mathcal{F}) = \mu^H_{\min}(\mathcal{E}_{d-1}) + \mu^H_{\min}(\mathcal{F}).$$
As $\mathcal{E}_{d-2}/\mathcal{E}_{d-1}$ is semistable, we also have by Case (b) that 
$$\mu_{\min}^H\bigl((\mathcal{E}_{d-2}/\mathcal{E}_{d-1})\otimes \mathcal{F}\bigr) = \mu_{\min}^H(\mathcal{E}_{d-2}/\mathcal{E}_{d-1}) + \mu^H_{\min}(\mathcal{F}).$$
By Lemma \ref{lem2.2} applied to the short exact sequence (\ref{seq2}), we also have 
$$\mu_{\min}^H\bigl((\mathcal{E}_{d-2}/\mathcal{E}_{d-1})\otimes \mathcal{F}\bigr) \geq \mu^H_{\min}(\mathcal{E}_{d-2} \otimes \mathcal{F})\geq \min\Bigl\{\mu^H_{\min}((\mathcal{E}_{d-2}/\mathcal{E}_{d-1})\otimes \mathcal{F}),\, \mu^H_{\min}(\mathcal{E}_{d-1}\otimes \mathcal{F}) \Bigr\}.$$
By the property of Harder-Narasimhan filtration, we have $$\mu^H_{\min}(\mathcal{E}_{d-1}) = \mu(\mathcal{E}_{d-1}) > \, \mu^H_{\min}\bigl(\mathcal{E}_{d-1}/\mathcal{E}_{d-2} \bigr) = \mu\bigl(\mathcal{E}_{d-1}/\mathcal{E}_{d-2} \bigr),$$
and therefore we conclude that $$\mu^H_{\min}\bigl(\mathcal{E}_{d-2}\otimes \mathcal{F}  \bigr) = \mu^H_{\min}\bigl(\mathcal{E}_{d-1}/\mathcal{E}_{d-2} \bigr) + \mu_{\min}^H(\mathcal{F}).$$
Now proceeding inductively, we get
$$\mu^H_{\min}\bigl(\mathcal{E}\otimes \mathcal{F} \bigr) = \mu^H_{\min}\bigl(\mathcal{E}_{0}/\mathcal{E}_{1} \bigr) + \mu^H_{\min}(\mathcal{F}) = \mu_{\min}^H(\mathcal{E})+\mu_{\min}^H(\mathcal{F}).$$
This completes the proof.
\item Repeated application of (1) gives the proof.
\item  Let $\mathcal{Q}$ be the Higgs quotient of $\Sym^k\mathcal{E}$ in the Harder-Narasimhan filtration of $\Sym^k\mathcal{E}$ having minimum slope among all Higgs quotients of $\Sym^k\mathcal{E}.$ Note that $\Sym^k\mathcal{E}$ is a non-zero Higgs quotient of $\mathcal{E}^{\otimes k},$ and hence every non-zero Higgs quotient of $\Sym^k\mathcal{E}$ is also a Higgs quotient of $\mathcal{E}^{\otimes k}.$ Thus by the definition of $\mu^H_{\min}$ and by part (2) we have
$$ k\, \mu^H_{\min}(\mathcal{E}) = \mu_{\min}^H (\mathcal{E}^{\otimes k})\leq \mu(\mathcal{Q})= \mu_{\min}^H(\Sym^k\mathcal{E}).$$
To prove the reverse inequality, we consider the Higgs quotient $\mathcal{Q'}$ of $\mathcal{E}$ having the minimum slope. Then $\Sym^k\mathcal{Q}'$ is a non-zero Higgs quotient of $\Sym^k\mathcal{E}.$ Therefore by the definition of $\mu^H_{\min}$ we have
$$\mu^H_{\min}(\Sym^k\mathcal{E}) \leq \mu(\Sym^k\mathcal{Q}') = k \, \mu(\mathcal{Q}') = k\, \mu^H_{\min}(\mathcal{E}).$$
This completes the proof.
\end{enumerate}
    \end{proof}
\end{lem}
\begin{defi}\label{2.4}
\rm A Higgs bundle $\mathcal{E}$ on a 
smooth projective variety $X$ 
is \textit{curve Higgs semistable} if for every morphism $f:C\longrightarrow X$ where $C$ is a smooth irreducible projective curve, the pullback Higgs bundle $f^*\mathcal{E}$ is Higgs semistable. 
\end{defi}

When the Higgs field is zero in Definition \ref{2.4}, we get back the usual curve semistability of ordinary vector bundles. Note that if a vector bundle $E$ is curve semistable, then the Higgs bundle $\mathcal{E} = (E,\theta)$ is curve Higgs semistable for any Higgs field $\theta$.

The discriminant of a Higgs bundle $\mathcal{E}=(E,\theta)$ of rank $r$, denoted by $\Delta(E)$, is defined as follows:
$$\Delta(E) = 2rc_2(E)-(r-1)c_1^2(E).$$
We have the following conjecture due to Ugo Bruzzo and Beatriz Gra\~{n}a Otero (see \cite[Theorem Theorems 4.7 and A.5]{BO11}).
\begin{conj}\cite[Conjecture 1.2]{BOR23}
    Let $\mathcal{E}=(E,\theta)$ be a Higgs bundle on a smooth projective variety $X$ of dimension $n$. Then the following are equivalent:
    \begin{enumerate}
        \item $\mathcal{E}$ is semistable with respect to some polarization $H$ and $\Delta(E)\cdot H^{n-2}=0$.
        \item For any morphism $f:C \longrightarrow X$, where $C$ is a smooth projective curve, the Higgs bundle $f^*\mathcal{E}$ is Higgs semistable, i.e., the Higgs bundle $\mathcal{E}$ is curve Higgs semistable.
    \end{enumerate}
\end{conj}
The fact that condition (1) implies condition (2) in the above conjecture was proved in \cite{BR06}. See \cite{BG16}, \cite{BLG19} and \cite{BOR23} for more details about this conjecture.

\section{Higgs nef and Higgs ample bundles}\label{Higgs-ample}

An ordinary vector bundle $E$ on a projective scheme $X$  is called \textit{ample} (respectively, \textit{nef}) if the tautological line bundle $\mathcal{O}_{\mathbb{P}(E)}(1)$ on its projectivization $\mathbb{P}_X(E)$ is ample (respectively, nef).
We now recall the notions of nefness and 
ampleness of Higgs bundles. 
\begin{defi}\label{defi-nef}

\rm \cite[Definition 2.3]{BBG19} Let $\mathcal{E} = (E,\theta)$ be a Higgs vector bundle of rank $r$. If $\rank(E) = 1$, then we say $\mathcal{E}$ is \textit{Higgs nef} ($\text{H}$-nef in short) if and only if $E$ is nef as a line bundle in the usual sense. If $\rank(E) \geq 2$, then the Higgs nefness ($\text{H}$-nefness in short) of $\mathcal{E}$ is defined inductively by requiring that:
\begin{enumerate}
    \item for all $1\leq k \leq r-1$, the universal Higgs quotient bundles $\mathcal{Q}_{\mathcal{E},k}$ on Higgs Grassmann schemes $\mathcal{G}r_k(\mathcal{E})$ are Higgs nef, and 
    \item the line bundle $\det(E)$ is nef in the usual sense.
\end{enumerate}
\end{defi}
\begin{xrem}\label{rem-1}
    Let $E$ be a nef vector bundle of rank $r$ on a projective variety $X$. Then the Higgs bundle $\mathcal{E} = (E,\theta)$ is Higgs nef for any Higgs field $\theta$. We prove this statement by induction on the rank of $\mathcal{E}.$ If $\rank(\mathcal{E})$ is one, then $\mathcal{E}$ is Higgs nef if and only if $E$ is nef. Now assume that $\rank(\mathcal{E}) \geq 2.$ Recall the  exact sequence 
    \eqref{seq-Grassmann}
    for each $1\leq k\leq r-1$:
    $$0\longrightarrow S_{E,r-k} \longrightarrow p_k^*E \longrightarrow Q_{E,k} \longrightarrow 0.$$ The vector bundle $p_k^*E$ is nef since it is the pullback of a nef vector bundle. Since quotient of a nef vector bundle is nef, we have $Q_{E,k}$ is nef. As $\mathcal{Q}_{E,k} = Q_{E,k}\vert_{\mathcal{G}r_k(\mathcal{E})}$, we conclude that the underlying vector bundle of the Higgs vector bundle $\mathcal{Q}_{E,k}$ is nef. Thus by induction hypothesis, we have that $\mathcal{Q}_{E,k}$ is Higgs nef for every $1\leq k \leq r-1.$ Also, $\det(E)$ is nef as $E$ is nef. Therefore $\mathcal{E}$ is Higgs nef.
\end{xrem}
\begin{xrem}\label{rem-nef}\rm
  For a Higgs bundle $\mathcal{E} =(E,\theta)$ of rank $r$ with the Higgs field $\theta = 0,$ the Higgs nefness of $\mathcal{E} = (E,\theta)$ is equivalent to the usual nefness of the underlying vector bundle $E$. In this case, we have $\mathcal{G}r_k(\mathcal{E}) = \Gr_{k}(E)$ and $\mathcal{O}_{\mathcal{G}r_k(\mathcal{E})}(1) = \mathcal{O}_{\Gr_k(E)}(1)$ for each $1\leq k \leq r-1$. Now Higgs nefness of $\mathcal{E}$ implies that $\det(\mathcal{Q}_{\mathcal{E},k}) = \mathcal{O}_{\mathcal{G}r_k(\mathcal{E})}(1)$ is nef for every $1\leq k \leq r-1.$ In particular, $\mathcal{O}_{\mathbb{P}(E)}(1) = \mathcal{O}_{\Gr_1(E)}(1) = \mathcal{O}_{\mathcal{G}r_1(\mathcal{E})}(1)$ is nef, and thus $E$ is nef in the usual sense.
\end{xrem}

\begin{prop}\label{prop4.5}
    Let $\mathcal{E} = (E,\theta)$ be a Higgs bundle of rank $r$ on a smooth projective variety $X$. Then the following are equivalent:
    \begin{enumerate}
    \item The Higgs bundle $\mathcal{E}$ is Higgs nef.
     \item The line bundles $\det(E)$ and $\mathcal{O}_{\mathcal{G}r_k(\mathcal{E})}(1)$ are nef for all $1\leq k\leq r-1.$
        \item For any smooth curve $C$ and non-constant map $f:C\longrightarrow X,$ we have $\mu^H_{\min}(f^*\mathcal{E}) \geq 0.$
       
         \end{enumerate}
\end{prop}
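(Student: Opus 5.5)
I will prove the cycle of implications $(1)\Rightarrow(2)\Rightarrow(3)\Rightarrow(1)$, arguing by induction on $r=\rank(E)$. When $r=1$ all three conditions just say that the line bundle $E$ is nef: the Higgs Grassmann schemes are empty, and $\mu^H_{\min}(f^*\mathcal{E})=\deg f^*E$.

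\textbf{$(1)\Rightarrow(2)$.} I use induction on $r$.
\begin{itemize}
\item If $\mathcal{E}$ is Higgs nef, then $\det(E)$ is nef by definition.
\item For each $1\le k\le r-1$, the universal quotient $\mathcal{Q}_{\mathcal{E},k}$ is Higgs nef, and its rank $k$ is less than $r$. By definition (or by the induction hypothesis) its determinant is nef.
\item That determinant is the Pl\"ucker line bundle, $\det(\mathcal{Q}_{\mathcal{E},k})=\mathcal{O}_{\mathcal{G}r_k(\mathcal{E})}(1)$, as already noted in Remark \ref{rem-nef}.
\end{itemize}

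\textbf{$(2)\Rightarrow(3)$.} Let $f:C\to X$ be non-constant with $C$ a smooth curve, and let $f^*\mathcal{E}\to\mathcal{Q}$ be the minimal-slope Higgs quotient, of rank $k$.
\begin{itemize}
\item On a curve the torsion-free quotient $\mathcal{Q}$ is locally free.
\item If $k=r$, then $\mathcal{Q}=f^*\mathcal{E}$ and $\deg\mathcal{Q}=\deg f^*\det E\ge 0$.
\item If $k<r$, the universal property of $\mathcal{G}r_k(\mathcal{E})$ gives a lift $\psi_k:C\to\mathcal{G}r_k(\mathcal{E})$ with $\mathcal{Q}=\psi_k^*\mathcal{Q}_{\mathcal{E},k}$. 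Then
\[
\deg\mathcal{Q}=\deg\psi_k^*\mathcal{O}_{\mathcal{G}r_k(\mathcal{E})}(1)\ge 0,
\]
by nefness of $\mathcal{O}_{\mathcal{G}r_k(\mathcal{E})}(1)$.
\end{itemize}
In both cases $\mu^H_{\min}(f^*\mathcal{E})=\mu(\mathcal{Q})\ge 0$.

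\textbf{$(3)\Rightarrow(1)$.} This is the step needing the induction.
\begin{itemize}
\item Nefness of $\det E$: for any curve $f:C\to X$ we have $\deg f^*\det E=r\,\mu(f^*\mathcal{E})\ge r\,\mu^H_{\min}(f^*\mathcal{E})\ge0$. Constant maps contribute nothing.
\item Remaining claim: $\mathcal{Q}_{\mathcal{E},k}$ is Higgs nef for each $k$. I apply the induction hypothesis to $\mathcal{Q}_{\mathcal{E},k}$ on $\mathcal{G}r_k(\mathcal{E})$. It then suffices to check that $\mu^H_{\min}(g^*\mathcal{Q}_{\mathcal{E},k})\ge0$ for every non-constant $g:C\to\mathcal{G}r_k(\mathcal{E})$.
\item Case $\rho_k\circ g$ non-constant: $g^*\mathcal{Q}_{\mathcal{E},k}$ is a Higgs quotient of $(\rho_k g)^*\mathcal{E}$. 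Lemma \ref{lem2.2} gives $\mu^H_{\min}(g^*\mathcal{Q})\ge\mu^H_{\min}((\rho_kg)^*\mathcal{E})\ge0$.
\item Case $\rho_k\circ g$ constant: the curve lies in a fibre. There $(\rho_k g)^*\mathcal{E}$ is the trivial bundle $\mathcal{O}_C^{\oplus r}$ with zero Higgs field, since $\Omega^1_X$ pulls back to a trivial bundle and its composite to $\Omega^1_C$ is zero. So $g^*\mathcal{Q}$ is a quotient of a trivial bundle with zero Higgs field; its Higgs quotients are ordinary quotients, all of nonnegative degree.
\end{itemize}

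\textbf{Main obstacle.} The delicate point is that $\mathcal{G}r_k(\mathcal{E})$ may be singular or reducible, while the paper states Higgs bundles and the proposition for smooth varieties. I will run the induction for Higgs bundles on arbitrary projective schemes, pulling back only along maps from smooth curves (composing with normalizations), so that (3) makes sense. I will also verify that the universal property of $\mathcal{G}r_k$ and Lemma \ref{lem2.2} need nothing beyond the curve $C$ being smooth.
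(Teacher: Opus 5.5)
Your proof is correct. For $(1)\Rightarrow(2)$ and $(2)\Rightarrow(3)$ it coincides with the paper's. Your split $k=r$ versus $k<r$ is the paper's split into the Higgs semistable and unstable cases, with the same lift through the universal property of $\mathcal{G}r_k(\mathcal{E})$.

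The real difference is in $(3)\Rightarrow(1)$. The paper does not argue this step; it cites the Barton--Kleiman type criterion \cite[Lemma 3.3]{BBG19}. You prove it directly by strong induction on the rank. You split curves $g:C\to\mathcal{G}r_k(\mathcal{E})$ according to whether $\rho_k\circ g$ is constant. You then use only two facts: Higgs quotients of a Higgs quotient are Higgs quotients, and quotients of a trivial bundle on a curve have nonnegative degree.

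What your route buys is self-containment, and it makes explicit something the citation hides. Because Definition \ref{defi-nef} is inductive, the criterion must hold for Higgs bundles (with fields valued in pullbacks of $\Omega^1_X$) on the possibly singular, reducible or non-reduced schemes $\mathcal{G}r_k(\mathcal{E})$. There nefness is tested on normalizations of integral curves, and your induction establishes the criterion in exactly that generality. Your $(3)\Rightarrow(1)$ also never uses the universal property. The paper's route is shorter, but it outsources the one implication that actually needs induction.

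A minor point concerns the constant case. Your claim that the Higgs field of $(\rho_kg)^*\mathcal{E}$ is zero relies on composing with $(\rho_kg)^*\Omega^1_X\to\Omega^1_C$. Suppose instead the field is kept with values in $(\rho_kg)^*\Omega^1_X$, as is implicit in defining $\mathcal{G}r_k(\mathcal{E})$ as a zero locus in $Q_{E,k}\otimes p_k^*\Omega^1_X$. Then it is constant but possibly nonzero. This is harmless: the Higgs quotients of $g^*\mathcal{Q}_{\mathcal{E},k}$ are in any case among its ordinary quotients. Those are globally generated, hence of nonnegative degree, so your conclusion holds in either convention.
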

\begin{proof}
    (1) $\implies$ (2): Suppose that $\mathcal{E}$ is Higgs nef on $X$. Then by definition, $\det(E)$ is nef and for all $1\leq k \leq r-1$, the universal Higgs quotient bundles $\mathcal{Q}_{\mathcal{E},k}$ on Higgs Grassmann bundles $\mathcal{G}r_k(\mathcal{E})$ are Higgs nef. Therefore, $\mathcal{O}_{\mathcal{G}r_k(\mathcal{E})}(1) = \det(\mathcal{Q}_{\mathcal{E},k})$ are nef for all $1\leq k\leq r-1$.
\vspace{2mm}

    (2) $\implies$ (3): Suppose $\det(E)$ and $\mathcal{O}_{\mathcal{G}r_k(\mathcal{E})}(1)$ are nef for all $1\leq k \leq r-1$.  We just need to prove the following:
    for any non-constant map $f\,:\,C\,\longrightarrow\, X$ from a smooth projective curve $C$ to $X$,
    $$\mu_{\min}^{H}(f^*\mathcal{E})\ \geq\  0.$$

    We consider the following two cases: 

    \begin{enumerate}
        \item {\bf Case 1}:\  Suppose that $f^*\mathcal{E}$ is a slope semistable Higgs vector bundle.
        Then $$\mu_{\min}^H\bigl(f^*\mathcal{E}\bigr)\, = \,\mu\bigl(f^*\mathcal{E}\bigr)\, =\, \dfrac{\det(E)\cdot\widetilde{C}}{r} \,\geq\, 0,$$
        where $\widetilde{C}$ is the image of $C$ in $X$.

         \item {\bf Case 2}:\ Assume that $f^*\mathcal{E}$ is not Higgs semistable, and let \begin{align*}
  0 \,= \,\mathcal{E}_d \,\subsetneq\, \mathcal{E}_{d-1} \,\subsetneq\, \mathcal{E}_{d-2} \,\subsetneq\, \cdots\,\subsetneq \,\mathcal{E}_1 \,\subsetneq\, \mathcal{E}_0\,=\, f^*\mathcal{E}
 \end{align*}
 be the Harder-Narasimhan filtration of $f^*\mathcal{E}.$ Let $s\,=\, \rank\bigl( f^*\mathcal{E}/\mathcal{E}_1\bigr)$. Then by the universal property of Higgs Grassmannian, there exists a lift 
 $f_s \,:\, C \,\longrightarrow \,\mathcal{G}r_s(\mathcal{E})$ of $f$ such that 
 $$\bigl(f^*\mathcal{E}/\mathcal{E}_1\bigr)\ =\ f_s^*\mathcal{Q}_{\mathcal{E},s}.$$
 
 As $\mathcal{O}_{\mathcal{G}r_s(\mathcal{E})}(1) \,=\, \det(\mathcal{Q}_{\mathcal{E},s})$ is nef, and since the pullback of a nef line bundle is nef, we have
 \begin{align*}
     \mu_{\min}(f^*\mathcal{E}) \,=\, \mu(f^*\mathcal{E}/\mathcal{E}_1) = \dfrac{1}{s}\deg(f_s^*\mathcal{Q}_{\mathcal{E},s})\, =\, \dfrac{f_s^*\bigl(\det(\mathcal{Q}_{\mathcal{E},s})\bigr)\cdot C}{s} \,\geq\,0.
 \end{align*}
  
 \end{enumerate}

(3) $\implies$ (1): The equivalence of (1) and (3) is given by \cite[Lemma 3.3]{BBG19}.
 
\end{proof}

\begin{xrem}
    In view of the above Proposition \ref{prop4.5}, we can define alternatively that a Higgs bundle $\mathcal{E} = (E,\theta)$ of rank $r$ is Higgs nef if and only if its determinant bundle $\det(E)$ is a nef line bundle and $\mathcal{O}_{\mathcal{G}r_k(\mathcal{E})}(1)$ are nef for all $1\leq k\leq r-1.$
\end{xrem}
\begin{defi}\label{defi2}
\rm  \cite[Definition 2.5]{BMR26}  Let $\mathcal{E} = (E,\theta)$ be a Higgs vector bundle of rank $r$ on a  projective variety $X$. We say $\mathcal{E}$ is \textit{Higgs ample} ($\text{H}$-ample in short) if it is H-nef and the following two conditions are satisfied:
    \begin{enumerate}
        \item for all $1\leq k \leq r-1$, the line bundles $\mathcal{O}_{\mathcal{G}r_k(\mathcal{E})} (1)$  on Higgs Grassmann schemes $\mathcal{G}r_k(\mathcal{E})$ are ample in the usual sense, and 
        \item $\det(E)$ is ample as a line bundle in the usual sense.
    \end{enumerate}
\end{defi}

\begin{xrem}\label{3.7}
    \rm If $E$ is an ample vector bundle, then the Higgs vector bundle $\mathcal{E}=(E,\theta)$ is Higgs ample for any Higgs field $\theta$ according to Definition \ref{defi2}.  Indeed, if $E$ is ample, then for any $1\leq k \leq \rank(E)$, its exterior powers $\wedge^kE$ are also ample, so by the Pl\"{u}cker embedding $\omega : \text{Gr}_k(E) \hookrightarrow \mathbb{P}(\wedge^kE)$, we see that  $\mathcal{O}_{\text{Gr}_k(E)}(1) = \omega^*\mathcal{O}_{\mathbb{P}(\wedge^k(E))}(1)$ is ample. Thus for any Higgs field $\theta$,  $\mathcal{O}_{\mathcal{G}r_k(\mathcal{E})}(1) = \mathcal{O}_{\text{Gr}_k(E)}(1)\vert_{\mathcal{G}r_k(\mathcal{E})}$  are ample. On the other hand, $\det(E)$ is also ample as $E$ is ample. This implies $\mathcal{E}=(E,\theta)$ is Higgs ample.
    \end{xrem}
 
\begin{xrem}
 \rm  According to Definition \ref{defi2}, when the Higgs field $\theta$ is 0, the Higgs bundle $\mathcal{E} = (E,0)$ is Higgs ample if and only if $E$ is ample as a vector bundle in the usual sense. This is indeed true as $\mathcal{G}r_k(E) = \text{Gr}_k(E)$ for every $k$ when the Higgs field $\theta$ is zero. If 
 $\mathcal{E}$ is Higgs ample then 
$\mathcal{O}_{\Gr_k(E)}(1)$ are ample for every $k$ implying that $E$ is ample. The converse follows from Remark \ref{3.7}. Therefore,  Definition \ref{defi2} is the natural generalization of the usual ampleness of ordinary vector bundles. An alternative definition of Higgs ampleness is also considered in \cite{BCO25}.
\end{xrem}

\section{Seshadri constants of Higgs bundles}\label{sc}

In this section, we define Seshadri constants for Higgs bundles and prove some of their fundamental properties. 
\begin{defi}\rm
    Let $\mathcal{E} = (E,\, \theta)$ be an \rm $\text{H}$\it-nef Higgs bundle of rank $r$ on a smooth projective variety $X$. Then the Higgs Seshadri constant of $\mathcal{E}$ at a point $x\in X$ is defined as follows:
    $$ \varepsilon^H(\mathcal{E}, x) := \min\Bigl\{\min_{1\,\leq k\, \leq r-1} \varepsilon(\xi_k;x) , \,\ \dfrac{1}{r}\, \varepsilon\bigl(\det(E);x\bigr)\Bigr\},$$
    where $\xi_k$ denotes the numerical class of $\mathcal{O}_{\mathcal{G}r_k(\mathcal{E})}(1)$ on the Higgs Grassmann $\mathcal{G}r_k(\mathcal{E}).$
\end{defi}

\begin{xrem}\rm
   If $\mathcal{E} = (E,\, \theta)$ is a \rm $\text{H}$\it-nef Higgs bundle of rank $1$ on a smooth projective variety $X$, then the Higgs Seshadri constant $\varepsilon^H(\mathcal{E};x)$  coincides with the usual Seshadri constant $\varepsilon(E;x)$ of the underlying line bundle $E$ at every point $x\in X$.
\end{xrem}

\begin{xrem}
\rm 
 For a $\text{H}$-nef Higgs bundle $\mathcal{E}=(E,\theta)$ of rank $r$ on a smooth projective variety $X$, recall the inclusions $i: \mathcal{G}r_k(\mathcal{E}) \hookrightarrow \text{Gr}_k(E)$ and the Pl\"{u}cker embedding $\omega : \text{Gr}_k(E) \hookrightarrow \mathbb{P}(\wedge^kE)$ such that $\xi_k = i^*\eta_k$ and $\eta_k = \omega^*\phi_k$, where $$\xi_k \equiv \mathcal{O}_{\mathcal{G}r_k(\mathcal{E})}(1) ,\, \eta_k \equiv \mathcal{O}_{\text{Gr}_k(E)}(1),\  \phi_k = \mathcal{O}_{\mathbb{P}(\wedge^k(E))}(1).$$
    Then by \cite[Lemma 3.18]{FM21} for every $1\leq k \leq r-1$ and for every point $x\in X$, we have for the usual Seshadri constants
    $$ \varepsilon(\xi_k\,;x) \geq \varepsilon(\eta_k\,;x) \geq \varepsilon(\phi_k\,;x).$$
    Note that using \cite[Corollary 3.21]{FM21}, we have $$\varepsilon(\phi_k\,;x) = \varepsilon(\wedge^kE;x) = \inf_{x\in C \subseteq X}\Bigl\{ \dfrac{\mu_{\min}(\nu^*(\wedge^k(E)))}{\mult_xC}\Bigr\}.$$
    Now note that since $\nu^*(\wedge^kE)$ is a quotient of $\nu^*(E^{\otimes k})$, every quotient of $\nu^*(\wedge^kE)$ is also a quotient of $\nu^*(E^{\otimes k})$. Thus we have by the definition of $\mu_{\min}$ that $$\mu_{\min}(\nu^*(\wedge^kE)) \geq \mu_{\min}(\nu^*(E^{\otimes k})) = k\, \mu_{\min}(\nu^*E).$$
    Since $\mu_{\min}(\nu^*\bigl(\wedge^k(E)\bigr) \geq  k\, \mu_{\min}(\nu^*E)$,  we conclude that 
    \begin{align}\label{s0}
    \varepsilon(\xi_k\,;x) \geq \varepsilon(\eta_k\,;x) \geq \varepsilon(\phi_k\,;x) \geq  k\,\inf_{x\in C \subseteq X}\Bigl\{ \dfrac{\mu_{\min}(\nu^*(E))}{\mult_xC}\Bigr\} = k\, \varepsilon(E;x) = k \, \varepsilon(\eta_1;x).
    \end{align}
 \end{xrem}
 
 \begin{xrem}
  \rm When the Higgs field $\theta =0$, then the $\text{H}$-nefness of a Higgs bundle $\mathcal{E}=(E,\theta)$ is equivalent to the usual nefness of the underlying vector bundle $E$ (see Remark \ref{rem-nef}). In such case, for every $1\leq k \leq r-1$, we have $\mathcal{G}r_k(\mathcal{E}) = \Gr_k(E)$ and $\xi_k=\eta_k$. Also, the nefness of $E$ implies $\varepsilon(E;x) = \varepsilon(\eta_1;x) \geq 0$ for all $x\in X$. Now from (\ref{s0}) we conclude that for all  $1\leq k \leq r-1$ $$ \varepsilon(\xi_k;x) = \varepsilon(\eta_k;x) \geq k\, \varepsilon(E;x) = k \, \varepsilon(\eta_1;x) \geq \varepsilon(\eta_1;x). $$
 Thus $$\min\limits_{1\,\leq k\, \leq r-1} \varepsilon(\xi_k; x) = \min\limits_{1\,\leq k\, \leq r-1} \varepsilon(\eta _k; x) = \varepsilon(\eta_1;x) = \varepsilon(E;x).$$
 Also by \cite[Lemma 3.28]{FM21}, $$\varepsilon(E;x) \leq \dfrac{1}{r} \varepsilon(\det(E);x)$$ for every point $x\in X$.
 
 Thus $$ \varepsilon^H(\mathcal{E}; x) = \min\Bigl\{ \min_{1\,\leq k\, \leq r-1} \varepsilon(\xi_k; x), \,\ \dfrac{1}{r} \varepsilon\bigl(\det(E);x\bigr)\Bigr\} = \varepsilon(E;x),$$ i.e., in the zero Higgs field case, the Higgs Seshadri constants of a Higgs nef Higgs bundle coincide with the usual Seshadri constants for nef vector bundles.
\end{xrem}
\begin{xrem}
\rm 
Let $E$ be a nef vector bundle on a smooth projective variety $X$. Then $\mathcal{E} = (E,\theta)$ is a nef Higgs bundle for any Higgs field $\theta$ (see Remark \ref{rem-1}). In this case, by taking minimum on both sides of \rm (\ref{s0}) over the set $1\leq k \leq r-1$, we get 
    \begin{align*}
   \min_{1\,\leq k\, \leq r-1} \varepsilon(\xi_k;x) \geq \min_{1\,\leq k\, \leq r-1} k\,\ \varepsilon(\eta_1;x) = \varepsilon(\eta_1;x) = \varepsilon(E;x)
    \end{align*}
    for every point $x\in X.$  Also by \cite[Lemma 3.28]{FM21} $$\varepsilon(E;x) \leq \dfrac{1}{r} \varepsilon(\det(E);x)$$ for every point $x\in X$.
    Thus we conclude that 
    \begin{align}\label{s1}
    \varepsilon^H(\mathcal{E}; x) = \min\Bigl\{ \min_{1\,\leq k\, \leq r-1} \varepsilon(\xi_k; x), \,\ \dfrac{1}{r} \varepsilon\bigl(\det(E);x\bigr)\Bigr\} \geq \varepsilon(E;x).
    \end{align}
\end{xrem}

\begin{exm}
\rm The inequality in  (\rm \ref{s1}) \rm can be strict as  Example \ref{exm3.4} below shows.
\end{exm}

\begin{thm}\label{thm-curves}
     Let $\mathcal{E} = (E,\, \theta)$ be a Higgs nef Higgs bundle of rank $r$ on a smooth projective curve $X$. Then 
     $$ \varepsilon^H(\mathcal{E}; x) = \mu^H_{\min}(\mathcal{E})$$ for all points $x\in X$.
    
     \begin{proof}
     
      Note that by \cite[Remark 3.22]{FM21} for any point $x\in X$ \begin{center}
    $ \varepsilon(\xi_k;\, x) = \sup\Bigl\{t\geq 0 \mid \xi_k-tf_k$ is nef$\Bigr\},$ 
    \end{center}
    where $f_k$ denotes the numerical class of a fiber of the map $\rho_k : \mathcal{G}r_k(\mathcal{E}) \longrightarrow X$.
    We consider the following two cases: 
%     \begin{enumerate}
         %\item
        
         \bf Case 1: \rm
    We first prove the theorem under the assumption that $\mathcal{E}$ is Higgs semistable. Since $\mathcal{E}$ is Higgs semistable, using \cite[Theorem 1.2]{BR06}, we have that,
      for each $1\leq k\leq r-1$,\begin{center}
     $\sup\Bigl\{ t \geq 0\mid \xi_k - tf_k $ is nef $\Bigr\} = k\, \mu(\mathcal{E}).$
     \end{center}
     As $\mathcal{E}$ is Higgs semistable and $\textrm{H}$-nef, by \cite[Lemma 3.3]{BBG19}
     $$\mu_{\min}^H(\mathcal{E}) = \mu(\mathcal{E}) \geq 0.$$
     Thus in this case, for any point $x\in X$ 
     \begin{align}\label{seq10}
     \min\limits_{1\leq k \leq r-1}\varepsilon(\xi_k;x) = \min\limits_{1\leq k \leq r-1} k \, \mu_{\min}^H(\mathcal{E}) = \min\limits_{1\leq k \leq r-1} k \, \mu(\mathcal{E}) = \mu(\mathcal{E}) = \mu_{\min}^H(\mathcal{E}).
     \end{align}
     Since $X$ is a smooth curve, we have $\dfrac{1}{r}\, \varepsilon\bigl(\det(E);x) = \dfrac{\deg(E)}{r} = \mu(\mathcal{E}) = \mu^H_{\min}(\mathcal{E})$ for every point $x$ in $X$. Therefore,
      $$\varepsilon^H(\mathcal{E}; x) = \min\Bigl\{ \min_{1\,\leq k\, \leq r-1} \varepsilon(\xi_k; x), \,\ \dfrac{1}{r} \varepsilon\bigl(\det(E);x\bigr)\Bigr\} = \mu_{\min}^H(\mathcal{E}).$$
  %\item 
  
  \bf Case 2: \rm 
     We  assume from now on  that $\mathcal{E}$ is Higgs unstable.
     As $\mathcal{E}$ is Higgs nef,  by \cite[Lemma 3.3]{BBG19}, $\mu^H_{\min}(\mathcal{E}) \geq 0.$ We consider the following two subcases:
     
   %  \begin{enumerate}
         %\item  
         \bf Subcase 2(a): \rm We first prove the theorem under the assumption that $\mu^H_{\min}(\mathcal{E})=0.$ Then there exists a Higgs semistable Higgs quotient bundle $\mathcal{Q}$ of $\mathcal{E}$ having minimal slope such that $\mu^H_{\min}(\mathcal{E})=\mu(\mathcal{Q})=0$.  Thus by (\ref{seq10}) in Case 1, for every point $x\in X$ $$\varepsilon^H(\mathcal{Q};x) = \mu^H_{\min}(\mathcal{Q}) = \mu(\mathcal{Q}) = 0.$$ 

         Our claim is that $\varepsilon^H(\mathcal{E};x) = 0.$ Note that for any $1\leq k \leq \rank(\mathcal{Q})-1,$ we have the following diagram  \begin{center}
   \begin{tikzcd}
\mathcal{G}r_k(\mathcal{Q}) \arrow[rd, "\rho_k'"] \arrow[r, "i"] & \mathcal{G}r_k(\mathcal{E}) \arrow[d,"\rho_k"]\\
& X
\end{tikzcd}
  \end{center}
such that $i^*\xi_{\mathcal{E},k} = \xi_{\mathcal{Q},k}.$ Hence using  \cite[Lemma 3.19]{FM21} we have 
for every point $x\in X$
         $$\varepsilon(\xi_{\mathcal{Q},k};x) = \varepsilon(i^*\xi_{\mathcal{E},k} ;x) \geq \varepsilon(\xi_{\mathcal{E},k};x) \geq \varepsilon^H(\mathcal{E};x).$$
         As $\mathcal{Q}$ is Higgs semistable, we have 
         $$\varepsilon(\xi_{\mathcal{Q},k};x) = \sup\Bigl\{ t\geq 0 \mid \xi_{\mathcal{Q},k} - t F_k \hspace{2mm} \text{is nef}\Bigr\} = k \,\ \mu(\mathcal{Q}) = k\,\ \mu_{\min}^H(\mathcal{E}) = 0,$$
         where $F_k$ denotes the fiber of the map $\rho'_k: \mathcal{G}r_k(\mathcal{Q})\longrightarrow X.$
         Since $\mathcal{E}$ is Higgs nef, we always have $\varepsilon^H(\mathcal{E};x) \geq 0.$ This shows that $\varepsilon^H(\mathcal{E};x) = 0$ and proves our claim.

         Therefore, in this subcase, for every point $x\in X$ $$ \varepsilon^H(\mathcal{E};x) = \mu_{\min}^H(\mathcal{E}) = 0.$$

         %\item 
         \bf Subcase 2(b): \rm Now we assume that $\mathcal{E}$ is Higgs unstable and $\mu^H_{\min}(\mathcal{E}) > 0$. Let $L$ be a line bundle of negative degree on $X$ such that $\mu^H_{\min}(\mathcal{E}\otimes L) = \mu^H_{\min}(\mathcal{E}) + \deg(L) = 0$. 
         
         Recall that 
         \begin{align}\label{s12}
         \mu(\mathcal{E}\otimes L) \geq \mu_{\min}^H(\mathcal{E}\otimes L) = 0.
         \end{align}
         
         So $\mathcal{E}\otimes L$ is Higgs nef by Proposition \ref{prop4.5}.
          We first note that for any $1\leq k\leq r-1$,  $\xi_{{\mathcal{E}\otimes L},k} =\xi_{\mathcal{E},k} \otimes \rho_k^*L^{\otimes k}.$ 
Now for any curve $C\in \mathcal{C}_{\rho_k,x}$, $$\dfrac{\xi_{{\mathcal{E}\otimes L},k}}{\mult_x \rho_k{_*}C} = \dfrac{\xi_{\mathcal{E},k}\cdot C + \rho_k^*L^{\otimes k}\cdot C}{\mult_x \rho_k{_*}C} =  \dfrac{\xi_{\mathcal{E},k}\cdot C }{\mult_x \rho_k{_*}C} + \dfrac{k\,\ \rho_k^*L\cdot C}{\mult_x \rho_k{_*}C}.$$
Using the projection formula,  we get 
\begin{align*}
\dfrac{\xi_{\mathcal{E},k}\cdot C }{\mult_x \rho_k{_*}C} + \dfrac{k\,\ \rho_k^*L\cdot C}{\mult_x \rho_k{_*}C} & = \dfrac{\xi_{\mathcal{E},k}\cdot C }{\mult_x \rho_k{_*}C} + \dfrac{k\,\ L\cdot \rho_k{_*} C}{\mult_x \rho_k{_*}C}\\
&=  \dfrac{\xi_{\mathcal{E},k}\cdot C }{\mult_x \rho_k{_*}C} + k\deg(L).
\end{align*}
Taking infimum over the curves $C\in \mathcal{C}_{\rho_k,x}$, 
%where $\rho_k:\mathcal{G}r_k(\mathcal{E})\longrightarrow X$, 
we get $$\varepsilon(\xi_{{\mathcal{E}\otimes L},k};x) = \varepsilon(\xi_{\mathcal{E},k};x) + k\deg(L)$$ for each $1\leq k \leq r-1$.

As $\mathcal{E}\otimes L$ is Higgs nef with $\mu_{\min}^H(\mathcal{E}\otimes L) = 0$, by Subcase 2(a), we have $$\varepsilon^H(\mathcal{E}\otimes L;x) = \mu_{\min}^H(\mathcal{E}\otimes L) = 0$$ for every point $x\in X$.

Hence
$$0=\varepsilon^H(\mathcal{E}\otimes L;x) \leq \varepsilon(\xi_{{\mathcal{E}\otimes L},k};x) = \varepsilon(\xi_{\mathcal{E},k};x) + k\deg(L).$$
This, in particular, implies that for each $1\leq k \leq r-1,$$$\varepsilon(\xi_{\mathcal{E},k};x) \geq -k\deg(L) \geq -\deg(L)$$
as $\deg(L) <0.$ Also, using (\ref{s12}) and the fact that $X$ is a smooth curve, we have $$\dfrac{1}{r}\,\ \varepsilon(\det(E);x) = \mu(\mathcal{E}) \geq -\deg(L).$$
         Therefore, we conclude that for every point $x\in X$ $$\varepsilon^H(\mathcal{E};x) \geq - \deg(L),\,\ \,\ \text{i.e.}, \,\ \ \varepsilon^H(\mathcal{E};x) + \deg(L) \geq 0.$$

         We also have $\mu_{\min}^H(\mathcal{E}\otimes L) = \mu_{\min}^H(\mathcal{E})+\deg(L) = 0.$
Hence $\varepsilon^H(\mathcal{E};x) \geq \mu^H_{\min}(\mathcal{E}).$

To prove the reverse inequality, just note that $$\varepsilon^H(\mathcal{E};x) \leq \varepsilon(\xi_{\mathcal{E},k};x) \leq \varepsilon(\xi_{\mathcal{Q},k};x) = k\,\ \mu^H(\mathcal{Q}) = k\,\ \mu^H_{\min}(\mathcal{E}),$$ for every $k$ and for every $x\in X,$ where $\mathcal{Q}$ is the unique torsion-free Higgs semistable Higgs quotient of $\mathcal{E}$ having the smallest slope among all the non-zero Higgs quotient. In particular,  as $\mu^H_{\min}(\mathcal{E})\geq 0$, we have for every point $x\in X$, $\varepsilon^H(\mathcal{E};x) \leq \mu_{\min}^H(\mathcal{E})$.
%\end{enumerate}

This completes the proof.
 %    \end{enumerate}
\end{proof}
\end{thm}
\begin{exm}\label{exm3.4}
\rm Let us consider a nilpotent Higgs bundle $\mathcal{E}=(E,\theta)$ on a smooth curve $C$ with $E=L_1\oplus L_2.$ Here $L_1,L_2$ are line bundles with $\deg(L_1) = 1,$ and $\deg(L_2) =0.$ Further assume that 
the Higgs field is given by $\theta: L_1\longrightarrow L_2\otimes \Omega^1_C$ and $\theta(L_2)=0.$

It is shown in \cite{BR06} that $\mathcal{E}$ has only two non-trivial proper Higgs quotients, i.e., $L_1$ and $Q_{tor}$, where $Q= \text{coker} (\theta\otimes id): E \otimes T_C\longrightarrow E$ and $Q_{tor}$ is $Q$ modulo torsion. Note that $\deg(Q_{tor}) \geq \deg(L_1)$. In this case, $$\mu_{\min}^H(\mathcal{E}) = \min\Bigl\{\mu(L_1),\, \mu(Q_{tor}),\, \mu(E)\Bigr\} = \mu(E) = \dfrac{1}{2}, $$
whereas the usual Harder-Narasimhan filtration of the ordinary bundle $E$ is given by: $$0 \subsetneq L_1\subsetneq E$$
so that $\mu_{\min}(E) = \deg(L_2) = 0.$ Note that the vector bundle $E$ is nef on the smooth curve $C$ as it is the direct sum of two nef line bundles, and hence the Higgs bundle $\mathcal{E}$ is Higgs nef by Remark \ref{rem-1}. However, for any point $x\in C$, the Higgs Seshadri constant is bigger than the usual Seshadri constant: 
$$\varepsilon^H(\mathcal{E};x) = \mu_{\min}^H(\mathcal{E}) = \dfrac{1}{2} >  \varepsilon(E;x) = \mu_{\min}(E) = \deg(L_2) = 0.$$
So the inequality in (\rm \ref{s1}) can be strict. 
Note that in this example, the Higgs bundle $\mathcal{E}$ is Higgs semistable, but the underlying vector bundle $E$ is not semistable.
\end{exm}
\begin{thm}\label{ses-criteria}
    (Seshadri's ampleness criterion for Higgs bundles)  A $\text{H}$-nef  Higgs bundle $\mathcal{E}=(E,\theta)$ of rank $r$ on a smooth projective variety $X$ is $\text{H}$-ample if and only if $$\inf_{x\in X}\varepsilon^H(\mathcal{E};x) > 0.$$
    \begin{proof}
        First suppose that $A:=\inf\limits_{x\in X} \varepsilon^H(\mathcal{E};x) > 0$. We will show that $\mathcal{E}$ is Higgs ample. We need to show that $\xi_k \equiv \mathcal{O}_{\mathcal{G}r_k(\mathcal{E})}(1)$ is ample for every $1\leq k \leq r-1,$ and $\det(E)$ is ample. 
        For a fixed $x\in X$ and for a fixed $k$, we have that, by the definition of Seshadri constants, $$\varepsilon(\xi_k;x) \geq \varepsilon^H(\mathcal{E};x)  \geq A >0, \hspace{3mm} \text{and} 
        \hspace{3mm} \dfrac{1}{r}\, \varepsilon(\det(E);x) \geq \varepsilon^H(\mathcal{E};x) \geq A >0. $$
        In particular, for each $k$ with $1\leq k\leq r-1$  $$\inf_{x\in X} \varepsilon(\xi_k;x) \geq A >0, \hspace{2mm} \text{and} \hspace{2mm} \dfrac{1}{r}\, \inf\limits_{x\in X} \varepsilon(\det(E);x) \geq A >0.$$
        Then by Seshadri's ampleness criterion \cite[Theorem 3.11]{FM21}, each $\xi_k$ is ample and $\det(E)$ is ample by \cite[Theorem 1.4.13]{L1}.

        Conversely, let $\mathcal{E}$ be $\text{H}$-ample.  We show that $$\inf_{x\in X}\varepsilon^H(\mathcal{E};x) > 0.$$ Note that as $\mathcal{E}$ is $H$-ample, $\det(E)$ is ample and each $\xi_k$ is ample for every $1\leq k \leq r-1$. So by Seshadri's ampleness criterion \cite[Theorem 3.11]{FM21}, we have for each $1\leq k \leq r-1$
        $$\inf_{x\in X} \varepsilon(\xi_k;x) > 0 \hspace{3mm} \text{and} \hspace{3mm} \dfrac{1}{r}  \inf\limits_{x\in X} \varepsilon(\det(E);x) > 0.$$
        Let $$M := \min\Bigl\{ \min\limits_{1\leq k \leq r-1} \inf\limits_{x\in X} \varepsilon(\xi_k;x) ,\,\  \dfrac{1}{r}  \inf\limits_{x\in X} \varepsilon(\det(E);x)\Bigr\}.$$ The previous observations imply that $M>0.$ We will show that $$\inf_{x\in X}\varepsilon^H(\mathcal{E};x) \geq M >0$$ and this will conclude the proof.

        For a fixed $x\in X$ and fixed $k$ with $1\leq k \leq r-1$, we observe that $$\varepsilon(\xi_k;x) \geq \inf_{x\in X} \varepsilon(\xi_k;x) \geq M.$$
        Similarly, $$\dfrac{1}{r}   \varepsilon(\det(E);x) \geq \dfrac{1}{r}  \inf\limits_{x\in X} \varepsilon(\det(E);x) \geq M.$$
        Thus by the definition of Higgs Seshadri constant, we conclude $$\varepsilon^H(\mathcal{E};x) \geq M > 0$$ for all $x\in X$.
    \end{proof}
\end{thm}

\begin{xrem}\label{upper-bound} \rm
(Upper bound) Using \cite[Proposition 3.14]{FM21}, we have the following upper bound for Higgs Seshadri constants.
Let $\mathcal{E} = (E,\, \theta)$ be a $\text{H}$-nef Higgs bundle of rank $r$ on a smooth projective variety $X$ of dimension $n$. Then for any $x\in X$ we have $$0\leq \varepsilon^H(\mathcal{E};x) = \min\Bigl\{ \min\limits_{1\leq k\leq r-1} \varepsilon(\xi_k;x),\,\  \dfrac{1}{r} \, \varepsilon \bigl( \det(E);x\bigr)\Bigr\} \leq  \min \Bigl\{ \min\limits_{1\leq k\leq r-1} M_k,\,\ \dfrac{1}{r} \sqrt[n]{\det(E)^n} \Bigr\},$$ with  
$$M_k = \sup\limits_W \Big(\dfrac{\xi_k^{\dim W}\cdot [W]}{\binom{\dim W}{\dim \rho_k(W)} \cdot \mult_x \rho_k(W)\cdot \xi_k^{\dim W_x'}[W_x']} \Bigr)^{\dfrac{1}{\dim \rho_k(W)}},$$
where the supremum is taken over  $W$ which ranges through the subvarieties of $\mathcal{G}r_k(\mathcal{E})$ that meet $\rho_k^{-1}(x)$ without being contained in it. In the above,
$W_{x'}$ is a fiber over the flat locus of $W \longrightarrow \rho_k(W).$
\end{xrem}

\begin{thm}\label{thm-res-curve}  (Restriction to curves)
    Let $\mathcal{E} = (E,\, \theta)$ be a Higgs bundle of rank $r$ on a smooth projective variety $X$. Let $x \in X$. Then 
    $$ \varepsilon^H(\mathcal{E}; x) = \inf_{x\in C\subseteq X}\Bigl\{\dfrac{\mu^H_{\min}(\nu^*\mathcal{E})}{\mult_xC}\Bigr\},$$ where for any curve $C\subset X$ passing through $x$, the map $\nu:\widetilde{C}\longrightarrow X$ is the composition of the normalization $\widetilde{C}\rightarrow C$ together with the inclusion $C \hookrightarrow X.$
    \begin{proof}
  Recall that   for any point $x\in X$, we have $$\varepsilon^H(\mathcal{E};x) = \min\Bigl \{ \min\limits_{1\leq k \leq  r-1}\varepsilon(\xi_k;x),\, \dfrac{1}{r}\varepsilon\bigl(\det(E);x\bigr)\Bigr\}.$$
  We will first show that the following two inequalities hold  for any point $x\in X$ and  for any curve $C\subset X$ passing through $x$:
  \begin{enumerate}
      \item $$\inf_{x\in C\subseteq X}\Bigl\{\dfrac{\mu^H_{\min}(\nu^*\mathcal{E})}{\mult_xC}\Bigr\} \leq \min\limits_{1\leq k \leq  r-1}\varepsilon(\xi_k;x).$$
      
\item $$\inf_{x\in C\subseteq X}\Bigl\{\dfrac{\mu^H_{\min}(\nu^*\mathcal{E})}{\mult_xC}\Bigr\} \leq \dfrac{1}{r} \varepsilon\bigl(\det(E);x\bigr).$$
  \end{enumerate}

  Let $C$ be a curve in $X$ passing through the point $x\in X$ and let $y\in \widetilde{C}$ be such that $\nu(y)=x$. Therefore, we have the following fiber product diagram:
        \begin{center}
 \begin{tikzcd} 
 \mathcal{G}r_k(\nu^*\mathcal{E}) \arrow[r, "\widetilde{\nu}"] \arrow[d, "\widetilde{\rho}_k"]
& \mathcal{G}r_k(\mathcal{E}) \arrow[d,"\rho_k"]\\
\widetilde{C}\arrow[r, "\nu" ]
& X
\end{tikzcd}
\end{center}
such that $\widetilde{\nu}^*\xi_k \equiv \mathcal{O}_{ \mathcal{G}r_k(\nu^*\mathcal{E})}(1).$ 
We also observe that $$\mathcal{C}_{\rho_k,x} = \bigcup\limits_{x\in C\subseteq X} \mathcal{C}_{\widetilde{\rho}_k,y}.$$
For any curve $B' = \widetilde{\nu}_*B \in \mathcal{C}_{\rho_k,x}$ for $B\in \mathcal{C}_{{\widetilde{\rho}_k},y}$, let $\mult_y\widetilde{{\rho}_k}_*B =d$ (say). Then $$\mult_y\nu_*\widetilde{\rho_k}_*B = d \mult_y\nu_*\widetilde{C} = d \mult_x C.$$
Thus, by the projection formula,
\begin{align*}
\dfrac{\xi_k\cdot B'}{\mult_x{\rho_k}_*B'} & = \dfrac{\xi_k\cdot\widetilde{\nu}_*B}{\mult_x{\rho_k}_*\widetilde{\nu}_*B} \\
&= \dfrac{\xi_k\cdot\widetilde{\nu}_*B}{\mult_x\nu_*{\widetilde{\rho_k}}_*B} \\
& = \dfrac{\widetilde{\nu}^*\xi_k\cdot B}{\mult_xC\cdot\mult_y\widetilde{\rho_k}_*B} \\
& \geq \dfrac{1}{\mult_xC}\, \varepsilon(\nu^*\xi_k; y)\\
&\geq \dfrac{1}{\mult_xC}\, \varepsilon^H(\widetilde{\nu}^*\mathcal{E};y)\\
& = \dfrac{\mu^H_{\min}(\nu^*\mathcal{E})}{\mult_xC} \geq \inf_{x\in C\subseteq X}\Bigl\{\dfrac{\mu^H_{\min}(\nu^*\mathcal{E})}{\mult_xC}\Bigr\}
\end{align*}
Then $$\inf_{x\in C\subseteq X}\Bigl\{\dfrac{\mu^H_{\min}(\nu^*\mathcal{E})}{\mult_xC}\Bigr\} \leq \min\limits_{i\leq k \leq  r-1}\varepsilon(\xi_k;x).$$\\

This proves (1). Now we also have $\mu_{\min}^H(\nu^*\mathcal{E}) \leq \mu(\nu^*\mathcal{E}).$ Hence 
\begin{align*}
\inf_{x\in C\subseteq X}\Bigl\{\dfrac{\mu^H_{\min}(\nu^*\mathcal{E})}{\mult_xC}\Bigr\} & \leq  \inf_{x\in C\subseteq X}\Bigl\{\dfrac{\mu(\nu^*\mathcal{E})}{\mult_xC}\Bigr\} \\
& = \dfrac{1}{r} \inf_{x\in C\subseteq X}\Bigl\{\dfrac{\det(E)\cdot C}{\mult_xC}\Bigr\}\\
& = \dfrac{1}{r} \varepsilon\bigl(\det(E);x\bigr).
\end{align*}
This proves (2). Combining (1) and (2), we conclude that $$\inf_{x\in C\subseteq X}\Bigl\{\dfrac{\mu^H_{\min}(\nu^*\mathcal{E})}{\mult_xC}\Bigr\} \leq \varepsilon^H(\mathcal{E};x).$$

Now we prove the reverse inequality. Let $C\subset X$ be a curve passing through a point $x\in X$, and  let $B$ be a curve in $\mathcal{C}_{\widetilde{\rho}_k,y}$. Then $\widetilde{\nu}_*B \in \mathcal{C}_{\rho_k,x},$ and by the projection formula
\begin{align*}
    \dfrac{1}{\mult_xC}\Bigl\{\dfrac{\widetilde{\nu}^*\xi_k\cdot B}{\mult_y\widetilde{\rho_k}_*B}\Bigr\}
    & = \dfrac{\xi_k\cdot \widetilde{\nu}_*B}{\mult_x\nu_*\widetilde{\rho_k}_*B}
     = \dfrac{\xi_k\cdot \widetilde{\nu}_*B}{\mult_x{\rho_k}_*\widetilde{\nu}_*B}
    \geq \varepsilon(\xi_k;x)\geq \varepsilon^H(\mathcal{E};x).
\end{align*}
Then $$\dfrac{\widetilde{\nu}^*\xi_k\cdot B}{\mult_y\widetilde{\rho_k}_*B} \geq \mult_xC \cdot \varepsilon^H(\mathcal{E};x).$$
Therefore $$\varepsilon(\widetilde{\nu}^*\xi_k;y) \geq \mult_xC \cdot \varepsilon^H(\mathcal{E};x).$$ Thus we have for each $k$
\begin{align}\label{s14}
\dfrac{1}{\mult_xC}\,\  \varepsilon(\widetilde{\nu}^*\xi_k;y) \geq \varepsilon^H(\mathcal{E};x).
\end{align}Now we will show that $$ \dfrac{1}{r}\,  \varepsilon\bigl(\nu^*\det(E);y\bigr) \geq \mult_xC\cdot \varepsilon^H(\mathcal{E};x),$$
where $\nu(y)=x$.
We note that $$\varepsilon\bigl(\det(E);x\bigr) = \inf\limits_{x\in C\subseteq X} \dfrac{\mu_{\min}(\nu^*\det(E))}{\mult_xC} \leq \dfrac{\mu_{\min}\bigl(\nu^*\det(E)\bigr)}{\mult_xC}.$$
Thus 
\begin{align}\label{s15}
\varepsilon^H(\mathcal{E};x)\leq \dfrac{1}{r}\, \varepsilon\bigl(\det(E);x\bigr) \leq  \dfrac{1}{r} \dfrac{\mu_{\min}\bigl(\nu^*\det(E)\bigr)}{\mult_xC} = \dfrac{1}{r}\dfrac{\varepsilon(\nu^*\det(E);y)}{\mult_xC}.
\end{align}
Hence, combining (\ref{s14}) and (\ref{s15}), we conclude that 
$$ \varepsilon^H(\mathcal{E};x) \leq \dfrac{\varepsilon^H(\nu^*\mathcal{E};y)}{\mult_xC} = \dfrac{\mu_{\min}^H(\nu^*\mathcal{E})}{\mult_xC}.$$

Therefore

$$\varepsilon^H(\mathcal{E};x) \leq \inf\limits_{x\in C\subseteq X} \dfrac{\varepsilon^H(\nu^*\mathcal{E};y)}{\mult_xC} = \inf\limits_{x\in C\subseteq X} \dfrac{\mu_{\min}^H(\nu^*\mathcal{E})}{\mult_xC}.$$
This completes the proof.
    \end{proof}
\end{thm}
\section{Properties of Higgs Seshadri constants}\label{properties}

We continue our study of Higgs Seshadri constants in this section and prove more properties of them. 
\begin{thm}\label{5.1} 
     Let $\mathcal{E}=(E,\theta)$ be a $\text{\rm H}$-nef Higgs bundle of rank $r$ on a smooth projective variety $X$. Then for every point $x\in X$ 
     \begin{align}\label{seq11}
     \varepsilon^H(\mathcal{E};x) \leq \dfrac{1}{r} \varepsilon\bigl(\det(E),x\bigr).
     \end{align}
     Moreover, if $\mathcal{E}$ is curve Higgs semistable, then the equality holds.
     \begin{proof} By definition of the Higgs Seshadri constant for Higgs vector bundles, we have for each point $x\in X$ that \begin{align*}
     \varepsilon^H(\mathcal{E};x) \leq \dfrac{1}{r} \varepsilon\bigl(\det(E),x\bigr).
     \end{align*}
     
     Now note that $\det(E)$ is a nef line bundle as $\mathcal{E}$ is Higgs nef.
         By Theorem \ref{thm-res-curve} $$ \varepsilon^H(\mathcal{E}; x) = \inf_{x\in C\subseteq X}\Bigl\{\dfrac{\mu^H_{\min}(\nu^*\mathcal{E})}{\mult_xC}\Bigr\},$$ for all points $x\in X$, where $\nu$ is as in Theorem \ref{thm-res-curve}. 
         Now, in the case of a curve semistable bundle, we have $\mu_{\min}^H(\nu^*\mathcal{E}) = \mu(\nu^*\mathcal{E})$ for every curve $C$ in $X$ passing through $x$. Thus $$\varepsilon^H(\mathcal{E};x) = \inf_{x\in C\subseteq X}\Bigl\{\dfrac{\mu(\nu^*\mathcal{E})}{\mult_xC}\Bigr\} = \dfrac
         {1}{r} \inf_{x\in C\subseteq X}\dfrac{\det(E)\cdot C}{\mult_xC} = \dfrac{1}{r}\varepsilon(\det(E);x).$$
         Therefore, the equality occurs in (\ref{seq11}).
     \end{proof}
\end{thm}

\begin{exm} \rm
 Recall that the Chern classes of a smooth surface of general type $X$ satisfy the Miyaoka-Yau inequality $$3c_2(X)\geq c_1(X)^2.$$  Let $X$ be a projective minimal surface of general type over an algebraically closed field %$\mathbb{K}$ 
of characteristics zero such that $3c_2(X)=c_1(X)^2.$ Such surfaces exist, eg. ball quotients. Consider the following Simpson's system, i.e., the Higgs bundle $\mathcal{S} = (S,\theta)$ where $S=\Omega^1_X\oplus \mathcal{O}_X,$ and $\theta(\omega,f) = (0,\omega)$. Then the Simpson's system $\mathcal{S}$ is a semistable Higgs bundle with vanishing discriminant (see \cite{L15}), and thus it is curve Higgs semistable. Therefore for every point $x\in X$, we have $$\varepsilon^H(\mathcal{S};x) = \dfrac{1}{\rank(S)}\varepsilon(\det S;x) = \dfrac{1}{3} \varepsilon(K_X;x),$$
where $K_X$ is the canonical divisor on $X$.
    
\end{exm}
\begin{prop}\label{prop6.2}
 Let $\psi:X\longrightarrow Y$ be a map between two smooth projective varieties, and $L$ be a fixed ample line bundle $Y$. Let $\mathcal{E} = (E,\theta)$ be a Higgs semistable Higgs bundle of rank $r$ such that $\Delta(E)\cdot L^{\dim(Y)-2} = 0.$ Then its pullback $\psi^*\mathcal{E}$ is a curve  Higgs semistable Higgs bundle on $X$. 
\begin{proof}
    Let $\phi: C \longrightarrow X$ be a non-constant morphism from a smooth curve $C$ to $X$. If the image of $\phi$ is contained in any fibre of $\psi$, then the underlying vector bundle of the pullback Higgs bundle $\phi^*(\psi^*\mathcal{E})$ is trivial, and hence Higgs semistable. Now let us assume that the image is not contained in any fibre of $\phi$. As $\mathcal{E}$ is Higgs semistable bundle on $Y$ with $\Delta(E)\cdot L^{\dim(Y)-2} =0$,  it is in particular curve Higgs semistable. Thus the pullback Higgs bundle  $(\psi\circ\phi)^*\mathcal{E} = \phi^*(\psi^*\mathcal{E})$ under the non-constant morphism $\psi\circ\phi$  is semistable on $C$. Therefore the pullback Higgs bundle is curve Higgs semistable on $X$.
\end{proof}
\end{prop}
\begin{prop}\label{5.4}
Let $\psi: X = \mathbb{P}_C(W) \longrightarrow C$ be a projective bundle over a smooth curve $C$. Let $\mathcal{E}=(E,\theta)$ be a Higgs semistable Higgs bundle $\mathcal{E}$ on $C$. Then the Higgs bundle $\psi^*\mathcal{E} \otimes L$ is a curve semistable Higgs bundle on $X$ for any line bundle $L$ on $X$.
\begin{proof}
This follows from Proposition \ref{prop6.2} and from the definition of curve Higgs semistability.
\end{proof}
\end{prop}
Recall that a curve semistable Higgs bundle $\mathcal{E} = (E,\theta)$ is Higgs ample if and only if its determinant $\det(E)$ is ample in the usual sense (see \cite[Theorem 3.7]{BMR26}). 
\begin{corl}\label{5.5}
Let $\psi: X = \mathbb{P}_C(W)\longrightarrow C$ be a projective bundle over a smooth curve $C$ and let $\mathcal{V} = (V,\theta)$ be a curve semistable Higgs bundle of rank $r$ on $X$. Further assume that  $c_1(V) \equiv x\xi + yf$, where $\xi$ and $f$ are the numerical classes of
$\mathcal{O}_{\mathbb{P}(W)}(1)$ and a fibre of $\psi$, respectively. Then $\mathcal{V}$  is Higgs ample if and only if $x>0$ and $x\mu_{\min}(W)+y >0$.

In particular, if $\mathcal{E} =(E,\theta)$ is a Higgs semistable Higgs bundle of rank $r$ on $C$, then the Higgs bundle  $\mathcal{V} =\psi^*\mathcal{E} \otimes \mathcal{O}_X(m)$ is Higgs ample if and only if  $m>0$ and $m\mu_{\min}(W)+\mu(\mathcal{E}) > 0.$

 In these cases, the Higgs Seshadri constants 
 are given by 
 $$\varepsilon^H(\mathcal{V};x) = \dfrac{1}{r}\, \varepsilon(\det(V);x)$$
   for every point $x\in \mathbb{P}_C(W)$.
\begin{proof}
Note that by \cite[Lemma 2.1]{Fu11}, the nef cone 
of $\mathbb{P}_C(W)$ is given by
$$\Nef^1(\mathbb{P}_C(W)) = \Bigl\{ x(\xi-\mu_{\min}(W))+yf \mid x,y \in \mathbb{R}_{\geq 0}\Bigr\}.$$  
By applying duality, we obtain that the Mori cone of closed curves 
on $\mathbb{P}_C(W)$ is given by:
$$\overline{\NE}(\mathbb{P}_C(W)) = \Bigl\{ a\bigl(\xi^{p-1} -(\deg(W)-\mu_{\min}(W))\xi^{p-2}f\bigr) + b\xi^{p-2}f\mid  a,b \in \mathbb{R}_{\geq 0}\Bigr\},$$where $p$ is the rank of the vector bundle $W$.
As $\mathcal{V} = (V,\theta)$ is a curve semistable Higgs bundle on $X$,  $\mathcal{V}$ is Higgs ample if and only if its the determinant $\det(V) \equiv x\xi+yf$ is ample if and only if
\begin{enumerate}
    \item $\det(V) \cdot  \bigl(\xi^{p-1} -(\deg(W)-\mu_{\min}(W))\xi^{p-2}f\bigr) = x\mu_{\min}(W)+y > 0,$ and
    \item $\det(V)\cdot \xi^{p-2}f = y >0. $
\end{enumerate}
   Moreover, if $\mathcal{E}$ is a Higgs semistable Higgs bundle on the smooth curve $C$, then the Higgs bundle $\mathcal{V} = \psi^*\mathcal{E} \otimes \mathcal{O}_{\mathbb{P}_C(W)}(m)$ is curve Higgs semistable by Proposition \ref{5.4} with vanishing discriminant for any $m$.  Thus $\mathcal{V} = \psi^*\mathcal{E} \otimes \mathcal{O}_{\mathbb{P}_C(W)}(m)$ is Higgs ample if and only if its determinant $\det(V) \equiv rm\xi+\deg(E)f$ is ample in the usual sense (see \cite[Theorem 3.7]{BMR26}).

   In these cases, by Theorem \ref{5.1} the Higgs Seshadri constants $$\varepsilon^H(\mathcal{V};x) = \dfrac{1}{r}\, \varepsilon(\det(V);x)$$
   for every point $x\in \mathbb{P}_C(W)$.
\end{proof}
\end{corl}
\begin{xrem}\label{5.6}
\rm Let  $\mathcal{E} = (E,\theta)$ be a Higgs semistable Higgs bundle on a smooth curve $C$. Then $\mathcal{E}$ is curve Higgs semistable Higgs bundle by \cite[Lemma 3.3]{BR06}. Thus by \cite[Theorem 3.7]{BMR26} $\mathcal{E}$  is  Higgs ample if and only its determinant line bundle $\det(E)$ is ample if and only if its degree $\deg(E)$ is positive.
\end{xrem}
Following \cite[Chapter 5, Section 2]{H66}, we say that a vector bundle W of rank 2 on a smooth projective curve C is \textit{normalized} if $H^0(C,W) \neq 0$, but $H^0(C,W\otimes L) =0$  for all line bundles $L$ on $C$ with $\deg(L) < 0. $
\begin{prop}\label{5.7}
    Let $\psi: X = \mathbb{P}_C(W)\longrightarrow C$ be a ruled surface defined by a normalized rank 2 bundle on the smooth projective curve $C$ of arbitrary genus $g$ such that $\mu_{\min}(W) = \deg(W).$ Let $i_{\sigma} : \sigma\hookrightarrow X$ and $i_f: f \hookrightarrow X$ are the inclusions of the normalized section $\sigma$ and a fiber $f$ of $\psi$, respectively.  Then 
   for a curve semistable Higgs bundle $\mathcal{E}=(E,\theta)$ of rank $r$ on $X$, the following are equivalent:
   \begin{enumerate}
       \item The Higgs bundle $\mathcal{E}=(E,\theta)$ is Higgs ample.
       \item The Higgs bundles $i_{\sigma}^*\mathcal{E}$ and $i_{f}^*\mathcal{E}$ are Higgs ample.
       \item The degrees $\deg(i^*_\sigma E)$ and $\deg(i_f^* E)$ are positive.
   \end{enumerate}
   
    In this case, by Theorem \ref{5.1} the Higgs Seshadri constants 
    are given by $$\varepsilon^H(\mathcal{E};x) = \dfrac{1}{r} \varepsilon(\det(E);x)
   % = \min\Bigl\{ \mu(E\vert_\sigma),\,\ \mu(E\vert_f)\Bigr\}
    , $$ at every point $x\in X$.
    \begin{proof}
      As $\mathcal{E}$ is curve semistable, the equivalence of (2) and (3) is given by Remark \ref{5.6}. Note that $\mathcal{E}$ is Higgs ample if and only if $x>0$ and $x\mu_{\min}(W)+y = x\deg(W) + y> 0$ if and only if $\deg(i_{\sigma}^*\mathcal{E}) = x\deg(W)+y>0$ and $\deg(i_{f}^*\mathcal{E}) = x > 0$ if and only if  the Higgs bundles $i_{\sigma}^*\mathcal{E}$ and $i_{f}^*\mathcal{E}$ are Higgs ample using Remark \ref{5.6} and Corollary \ref{5.5}.
    \end{proof}
\end{prop}
\begin{exm}\rm
    Let $\psi: X = \mathbb{P}_C(W)\longrightarrow C$ be a ruled surface defined by a normalized rank 2 bundle $W=\mathcal{O}_C\oplus \mathcal{L}$ on the smooth projective curve $C$ of arbitrary genus $g$ where $\deg(\mathcal{L}) <0.$ Then $\mu_{\min}(W) = \deg(W) =\deg(\mathcal{L}).$ Let $\mathcal{E} =(E,\theta)$ be a Higgs semistable Higgs bundle of rank $r$ on $C$, then the Higgs bundle  $\mathcal{V} =\psi^*\mathcal{E} \otimes \mathcal{O}_X(m)$ is Higgs ample if and only if  $m>0$ and $m\mu_{\min}(W)+\mu(E) = m\deg(\mathcal{L})+\mu(E) > 0.$

    For example, a Hirzebruch surface $\mathbb{F}_e = \mathbb{P}_{\mathbb{P}^1}(\mathcal{O}\oplus \mathcal{O}(-e))$ satisfies the hypothesis as above. Then for a Higgs ample bundle of the form $\mathcal{V} =\psi^*\mathcal{E} \otimes \mathcal{O}_X(m)$, we have the Higgs Seshadri constant
    $$\varepsilon^H(\mathcal{V};x) = \dfrac{1}{r} \varepsilon(\det(V);x) = \dfrac{1}{r}\, \varepsilon(rm\xi+\deg(E)f; x) = \varepsilon(m\xi+\mu(E)f; x). $$

    The usual Seshadri constant $\varepsilon(m\xi+\mu(E)f; x)$ can be computed using the results in \cite[Theorem 5.1]{G06}.
\end{exm}
\begin{exm}\rm
    We consider the ruled surface $\psi : X = \mathbb{P}_C(W) \longrightarrow C$ over the elliptic curve $C$ defined by the semistable bundle $W$ which sits in the non-split extension $$0\longrightarrow \mathcal{O}_C \longrightarrow W \longrightarrow \mathcal{O}_C \longrightarrow 0.$$ In this case, $\mu_{\min}(W) = \mu(W) = \deg(W)=0$. Then for a Higgs semistable bundle $\mathcal{E}$ on $C$, the Higgs bundle $\mathcal{V}$ of the form $\mathcal{V} =\psi^*\mathcal{E} \otimes \mathcal{O}_X(m)$ is Higgs ample if and only if the integer $m>0$ and the Higgs semistable bundle $\mathcal{E}$ on $C$ is of positive degree. In this case,
     we have the Higgs Seshadri constant
    $$\varepsilon^H(\mathcal{V};x) = \dfrac{1}{r} \varepsilon(\det(V);x) = \dfrac{1}{r}\, \varepsilon(rm\xi+\deg(E)f; x) = \varepsilon(m\xi+\mu(E)f; x).$$
The usual Seshadri constant $\varepsilon(m\xi+\mu(E)f; x)$ can be computed using the results in \cite[Theorem 6.8]{G06}.
\end{exm}
\begin{prop}
Let $\mathcal{E}=(E,\theta)$ be a Higgs nef Higgs vector bundle of rank $r$ on a smooth projective variety $X$. Let 
\begin{align}
 0=\mathcal{E}_d\subsetneq \mathcal{E}_{d-1}\subsetneq \cdots \subsetneq \mathcal{E}_1\subsetneq \mathcal{E}_0 = \mathcal{E}
\end{align}
be the Higgs Harder-Narasimhan filtration of $\mathcal{E}$ such that the quotient Higgs bundle $\mathcal{Q}_1 = \mathcal{E}_1/ \mathcal{E}_0$  is locally free, and $$\mu^H_{\min}(\nu^*\mathcal{E}) = \mu(\nu^*\mathcal{Q}_1)$$
 for all points $x\in X$, where for any curve $C\subset X$ passing through $x$, the map $\nu:\widetilde{C}\longrightarrow X$ is the composition of the normalization $\widetilde{C}\rightarrow C$ together with the inclusion $C \hookrightarrow X.$ Then for any $x\in X$
 $$\varepsilon^H(\mathcal{E};x) = \dfrac{1}{\rank(Q_1)} \,\ \varepsilon\bigl(\det(Q_1);x\bigr),$$
 where $Q_1$ is the underlying vector bundle of the Higgs quotient bundle $\mathcal{Q}_1$.
 \begin{proof}
By Theorem \ref{thm-res-curve} $$ \varepsilon^H(\mathcal{E}; x) = \inf_{x\in C\subseteq X}\Bigl\{\dfrac{\mu^H_{\min}(\nu^*\mathcal{E})}{\mult_xC}\Bigr\},$$ for all points $x\in X$.

Now by the given hypothesis $\mu^H_{\min}(\nu^*\mathcal{E}) = \mu(\nu^*\mathcal{Q}_1)$, and therefore for every point $x\in X$
$$ \varepsilon^H(\mathcal{E}; x) =  \dfrac{1}{\rank(Q_1)} \inf_{x\in C\subseteq X}\Bigl\{\dfrac{\det(Q_1)\cdot C}{\mult_xC}\Bigr\} = \dfrac{1}{\rank(Q_1)} \,\ \varepsilon\bigl(\det(Q_1);x\bigr).$$
This completes the proof.
 \end{proof}
\end{prop}
\begin{corl}
    Let $\psi : \mathbb{P}_C(W)\longrightarrow C$ be a projective bundle defined by a  rank $r$ vector bundle on a smooth projective curve $C$.  If $\mathcal{E} =(E,\theta)$ is a Higgs bundle of rank $r$ on $C$, then the Higgs bundle  $\mathcal{V} =\psi^*\mathcal{E} \otimes \mathcal{O}_X(m)$ is Higgs ample if and only if  $m>0$ and $m\mu_{\min}(W)+\mu^H_{\min}(\mathcal{E}) > 0.$ In this case, 
    for every point $x\in X$, the Higgs Seshadri constants 
$$ \varepsilon^H(\mathcal{E}; x)  = \dfrac{1}{\rank(Q_1)} \,\ \varepsilon\bigl(\det(Q_1);x\bigr),$$
where $Q_1$ is the underlying vector bundle of the Higgs quotient $\mathcal{Q}_1$ in the Higgs Harder-Narasimhan filtration of $\mathcal{E}$ having minimal slope.
\begin{proof}
   Let \begin{align}
 0=\mathcal{E}_d\subsetneq \mathcal{E}_{d-1}\subsetneq \cdots \subsetneq \mathcal{E}_1\subsetneq \mathcal{E}_0 = \mathcal{E}
\end{align}
be the Higgs Harder-Narasimhan filtration of $\mathcal{E}$.  Then by the uniqueness of the Harder-Narasimhan filtration 
 \begin{align}
 0=\mathcal{V}_d\subsetneq \mathcal{V}_{d-1}\subsetneq \cdots \subsetneq \mathcal{V}_1\subsetneq \mathcal{V}_0 = \mathcal{V}
\end{align}
is the Harder-Narasimhan filtration of $\mathcal{V}$ with successive Higgs semistable Higgs quotients $\mathcal{Q}_i = \Bigl(\psi^*\mathcal{E}_i/\psi^*\mathcal{E}_{i+1}\Bigr) \otimes \mathcal{O}_{\mathbb{P}_C(W)}(m) =\psi^*(\mathcal{E}_i/\mathcal{E}_{i+1}) \otimes \mathcal{O}_{\mathbb{P}_C(W)}(m)$ for $i=0,1,2,\cdots,d-1.$ 

 First assume that  $m>0$ and $m\mu_{\min}(W)+\mu^H_{\min}(\mathcal{E}) > 0.$ Note that, for each $i$, the Higgs quotient $\mathcal{E}_i/\mathcal{E}_{i+1}$ is Higgs semistable, and  we have 
$$\mu(\mathcal{E}_i/\mathcal{E}_{i+1}) \geq \mu(\mathcal{E}_0/\mathcal{E}_{1}) = \mu_{\min}^H(\mathcal{E}) > - m\mu_{\min}(W).$$
Hence each $\mathcal{Q}_i$ is Higgs ample, and thus inductively $\mathcal{V}$ is Higgs ample as extensions of Higgs ample vector bundles are Higgs ample.

Conversely, let $\mathcal{V}$ be Higgs ample, and thus $\mathcal{Q}_1 = \psi^*(\mathcal{E}_0/\mathcal{E}_1) \otimes \mathcal{O}_{\mathbb{P}_C(W)}(m)$ is Higgs ample, since it is a Higgs quotient. Thus we have $m>0$ and $\mu(\mathcal{E}_0/\mathcal{E}_1) = \mu_{\min}^H(\mathcal{E}) > -m\mu_{\min}(W).$

In this case, $\mathcal{Q}_1 = \mathcal{V}_1/ \mathcal{V}_0$  is locally free, and for all points $x\in X$ and for all curve $C$ passing through $x$, we have $$\mu^H_{\min}(\nu^*\mathcal{V}) = \mu(\nu^*\mathcal{Q}_1),$$ where for any curves $C\subset X$ passing through $x$, the map $\nu:\widetilde{C}\longrightarrow X$ is as in Theorem \ref{thm-res-curve}. Then for any $x\in X$
 $$\varepsilon^H(\mathcal{E};x) = \dfrac{1}{\rank(Q_1)} \,\ \varepsilon\bigl(\det(Q_1);x\bigr),$$
 where $Q_1$ is the underlying vector bundle of the Higgs quotient bundle $\mathcal{Q}_1$.
\end{proof}
\end{corl}

\begin{thm}\label{tensor}
    Let $\mathcal{E}=(E,\theta)$ and $\mathcal{F}=(F,\phi)$ be $\text{\rm H}$-nef Higgs bundles on a smooth projective variety $X$. Then $$\varepsilon^H(\mathcal{E}\otimes\mathcal{F};x)  \geq \varepsilon^H(\mathcal{E};x) + \varepsilon^H(\mathcal{F};x)$$
    for all points $x\in X.$  If $X$ is a smooth curve, or if $\mathcal{E} = \mathcal{F}$, then equality holds.
    \begin{proof}
        Recall that for any curve $C\subseteq X$ passing through a point $x$, we have $$ \mu^H_{\min}\bigl(\nu^*(\mathcal{E}\otimes \mathcal{F})\bigr) = \mu^H_{\min}\bigl( \nu^*\mathcal{E} \otimes \nu^*\mathcal{F}\bigr) = \mu^H_{\min}(\nu^*\mathcal{E}) + \mu^H_{\min}(\nu^*\mathcal{F}),$$
        where $\nu : \widetilde{C} \rightarrow X$ is the map as in Theorem \ref{thm-res-curve}.
        Thus, using Theorem \ref{thm-res-curve}  and applying Lemma \ref{lem3.3}(1), we have for all points $x\in X$
        \begin{align*}
        \varepsilon^H(\mathcal{E}\otimes\mathcal{F};x) & = \inf_{x\in C\subseteq X}\Bigl\{\dfrac{\mu^H_{\min}(\nu^*(\mathcal{E}\otimes \mathcal{F}))}{\mult_xC}\Bigr\}\\
        & \geq \inf_{x\in C\subseteq X}\Bigl\{\dfrac{\mu^H_{\min}(\nu^*\mathcal{E})}{\mult_xC}\Bigr\} + \inf_{x\in C\subseteq X}\Bigl\{\dfrac{\mu^H_{\min}(\nu^*\mathcal{F})}{\mult_xC}\Bigr\}\\
        & = \varepsilon^H(\mathcal{E};x) + \varepsilon^H(\mathcal{F};x).
        \end{align*}
        Now, if $X$ is a smooth curve, then for all points $x\in X$, we have 
        \begin{align*}
        \varepsilon^H(\mathcal{E}\otimes\mathcal{F};x) = \mu^H_{\min}(\mathcal{E}\otimes \mathcal{F}) = \mu^H_{\min}(\mathcal{E})+\mu^H_{\min}(\mathcal{F}) =  \varepsilon^H(\mathcal{E};x) +  \varepsilon^H(\mathcal{F};x).
        \end{align*}
        Moreover, if $X$ is a smooth projective variety of any dimension $d \geq 1$, and $\mathcal{E} = \mathcal{F}$, then for all points $x\in X$, we have
        $$ \varepsilon^H(\mathcal{E}\otimes\mathcal{F};x)  = \inf_{x\in C\subseteq X}\Bigl\{\dfrac{\mu^H_{\min}(\nu^*(\mathcal{E}\otimes \mathcal{E}))}{\mult_xC}\Bigr\} = 2  \inf_{x\in C\subseteq X}\Bigl\{\dfrac{\mu^H_{\min}(\nu^*(\mathcal{E}))}{\mult_xC}\Bigr\} = 2 \, \varepsilon^H(\mathcal{E}; x)$$

    This completes the proof. \end{proof}
\end{thm}
\begin{thm}\label{quotient}
   Let $\mathcal{E} = (E,\, \theta)$ be a $\text{H}$-nef Higgs bundle of rank $r$ on a smooth projective variety $X$, and $\mathcal{F}=(F,\overline{\theta}\vert_F)$ be a $\theta$-invariant quotient Higgs bundle of $\mathcal{E}.$ Then for any point $x\in X,$ we have 
   $$\varepsilon^H(\mathcal{F};x) \geq \varepsilon^H(\mathcal{E};x).$$
   \begin{proof}
     Note that  $\mathcal{F}$ is also a $\text{H}$-nef Higgs quotient vector bundle of $\mathcal{E}$. Then, for each $k$ with $1\,\leq\, k \,\leq\, \rank(\mathcal{F})-1$, there exists a morphism $i\,:\,\mathcal{G}r_k(\mathcal{F}) \,\longrightarrow\, \mathcal{G}r_k(\mathcal{E})$ over $X$ such that $i^*\mathcal{Q}_{\mathcal{E},k} \,=\, \mathcal{Q}_{\mathcal{F},k}$. Thus $\det\bigl( i^*\mathcal{Q}_{\mathcal{E},k} \bigr) \,=\, i^*\mathcal{O}_{\mathcal{G}r_k(\mathcal{E})}(1) \,=\, \det\bigl( \mathcal{Q}_{\mathcal{F},k} \bigr) \,=\, \mathcal{O}_{\mathcal{G}r_k(\mathcal{F})}(1).$

       Let $\xi_{\mathcal{F},k} \equiv \mathcal{O}_{\mathcal{G}r_k(\mathcal{F})}(1)$ and $\xi_{\mathcal{E},k} \equiv \mathcal{O}_{\mathcal{G}r_k(\mathcal{E})}(1).$
By \cite[Corollary 3.18]{FM21} we have 
       $$\varepsilon(\xi_{\mathcal{F},k};x) \geq \varepsilon(\xi_{\mathcal{E},k};x) \geq \varepsilon^H(\mathcal{E};x).$$

       Recall that,  by Theorem \ref{thm-res-curve}, $$\varepsilon^H(\mathcal{E};x) = \inf\limits_{x\in C\subseteq X} \Bigl\{\dfrac{\mu_{\min}^H(\nu^*\mathcal{E})}{\mult_xC}\Bigr\},$$
       where the maps $\nu$ are as in Theorem \ref{thm-res-curve}. 

       Note that for any curve $C\subseteq X$ passing through a point $x$, we have $$\mu_{\min}^H(\nu^*\mathcal{E}) \leq \mu(\nu^*\mathcal{F}),$$ as we have the following quotient map $$ \nu^*\mathcal{E} \longrightarrow \nu^*\mathcal{F} \longrightarrow 0.$$ 
       This, in particular, implies that $$\varepsilon^H(\mathcal{E};x) = \inf\limits_{x\in C\subseteq X} \Bigl\{\dfrac{\mu_{\min}^H(\nu^*\mathcal{E})}{\mult_xC}\Bigr\} \leq \inf\limits_{x\in C\subseteq X} \Bigl\{ \dfrac{ \mu(\nu^*\mathcal{F})}{\mult_xC}\Bigr\} = \dfrac{1}{\rank(\mathcal{F})}\,\ \varepsilon\bigl(\det(F);x\bigr).$$
        This shows that $$\varepsilon^H(\mathcal{F};x) \geq \varepsilon^H(\mathcal{E};x),$$ as required. 
   \end{proof}
\end{thm}

\begin{corl}\label{exact}
    Let $$0\,\longrightarrow\, \mathcal{E}_1\,\longrightarrow\, \mathcal{E} \,\longrightarrow\, \mathcal{E}_2 \,\longrightarrow\, 0$$ be a short exact sequence of Higgs vector bundles over a smooth projective variety $X$, where $\mathcal{E}_1\,=\,(E_1,\,\theta_1)$ is an invariant Higgs subbundle of $\mathcal{E} = (E,\theta)$ and 
    $\mathcal{E}_2\,=\,(E_2,\,\theta_2)$ is the corresponding Higgs quotient. Suppose that $\mathcal{E}_1$ and $\mathcal{E}_2$ are Higgs nef. Then $$\varepsilon^H(\mathcal{E};x) \geq \min\Bigl\{ \varepsilon^H(\mathcal{E}_1;x),\, \varepsilon^H(\mathcal{E}_2;x)\Bigr\}$$ for all $x\in X$.  In particular, if $\varepsilon^H(\mathcal{E}_1;x) \geq \varepsilon^H(\mathcal{E}_2;x)$, then $\varepsilon^H(\mathcal{E};x) = \varepsilon^H(\mathcal{E}_2;x)$.
    
    Moreover, if $\mathcal{E} = \mathcal{E}_1\oplus \mathcal{E}_2$ for two $\theta$-invariant Higgs subbundles $\mathcal{E}_1$ and $\mathcal{E}_2$ of $\mathcal{E},$ then $$\varepsilon^H(\mathcal{E};x) =\min\Bigl\{ \varepsilon^H(\mathcal{E}_1;x),\, \varepsilon^H(\mathcal{E}_2;x)\Bigr\}$$ for all $x\in X$. 
\end{corl}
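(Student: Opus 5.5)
The plan is to reduce everything to curves via Theorem \ref{thm-res-curve} and then use Lemma \ref{lem2.2}. First, $\mathcal{E}$ is H-nef. For any non-constant $f:C\to X$ from a smooth curve, pulling back gives an exact sequence $0\to f^*\mathcal{E}_1\to f^*\mathcal{E}\to f^*\mathcal{E}_2\to 0$ of Higgs bundles, with $f^*\mathcal{E}_1$ a $\theta$-invariant subbundle. By Proposition \ref{prop4.5}(3), $\mu^H_{\min}(f^*\mathcal{E}_i)\ge 0$ for $i=1,2$, so Lemma \ref{lem2.2} gives $\mu^H_{\min}(f^*\mathcal{E})\ge 0$. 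Hence $\mathcal{E}$ is H-nef, and its Higgs Seshadri constant is defined.

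Next, fix $x\in X$ and a curve $C\ni x$ with normalization map $\nu:\widetilde C\to X$. Apply Lemma \ref{lem2.2} to the pulled-back sequence on $\widetilde C$:
\begin{align*}
\mu^H_{\min}(\nu^*\mathcal{E}_2)\ \ge\ \mu^H_{\min}(\nu^*\mathcal{E})\ \ge\ \min\bigl\{\mu^H_{\min}(\nu^*\mathcal{E}_1),\,\mu^H_{\min}(\nu^*\mathcal{E}_2)\bigr\}.
\end{align*}
Divide by $\mult_xC$ and take the infimum over all curves $C$ through $x$. An infimum of pointwise minima equals the minimum of the two infima. So Theorem \ref{thm-res-curve} gives $\varepsilon^H(\mathcal{E};x)\ge \min\{\varepsilon^H(\mathcal{E}_1;x),\varepsilon^H(\mathcal{E}_2;x)\}$.

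For the ``in particular'' statement, the left inequality above, run through the same infimum argument, gives $\varepsilon^H(\mathcal{E};x)\le\varepsilon^H(\mathcal{E}_2;x)$. This is also Theorem \ref{quotient}. If $\varepsilon^H(\mathcal{E}_1;x)\ge\varepsilon^H(\mathcal{E}_2;x)$, the lower bound equals $\varepsilon^H(\mathcal{E}_2;x)$, and the two bounds force equality.

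For the direct sum case, both $\mathcal{E}_1$ and $\mathcal{E}_2$ are $\theta$-invariant Higgs quotients of $\mathcal{E}$ via the projections. Theorem \ref{quotient}, applied to each, gives $\varepsilon^H(\mathcal{E};x)\le\min\{\varepsilon^H(\mathcal{E}_1;x),\varepsilon^H(\mathcal{E}_2;x)\}$. The reverse inequality is the first part of the corollary, since $0\to\mathcal{E}_1\to\mathcal{E}\to\mathcal{E}_2\to0$ is exact.

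The only point needing care is that pullback to $\widetilde C$ preserves exactness and $\theta$-invariance. This holds because the bundles are locally free, so Lemma \ref{lem2.2} applies to the pulled-back sequence. Beyond that check, the argument is formal.
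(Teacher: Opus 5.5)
Your proposal is correct and takes essentially the same route as the paper. It reduces to normalized curves through $x$ via Theorem~\ref{thm-res-curve}, applies Lemma~\ref{lem2.2} to the pulled-back sequence, gets the upper bound from Theorem~\ref{quotient} (equivalently, the left inequality of Lemma~\ref{lem2.2}), and treats the direct sum as a special case.

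Your write-up is more careful than the paper's one-line ``assume $X$ is a curve'' reduction. You spell out the interchange of infimum and minimum, and you check explicitly, via Proposition~\ref{prop4.5} and Lemma~\ref{lem2.2}, that $\mathcal{E}$ is H-nef, so that $\varepsilon^H(\mathcal{E};x)$ is defined; the paper leaves that step implicit.
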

\begin{proof} By using Theorem \ref{thm-res-curve}, we assume without loss of generality that $X$ is a smooth curve. 

Therefore using Lemma \ref{lem2.2} we conclude that $$\varepsilon^H(\mathcal{E};x) \geq \min\Bigl\{ \varepsilon^H(\mathcal{E}_1;x),\, \varepsilon^H(\mathcal{E}_2;x)\Bigr\}$$ for all $x\in X$.

Now if $\varepsilon^H(\mathcal{E}_1;x) \geq \varepsilon^H(\mathcal{E}_2;x)$ for all $x\in X$, then $$\varepsilon^H(\mathcal{E};x) \geq \min\Bigl\{ \varepsilon^H(\mathcal{E}_1;x),\, \varepsilon^H(\mathcal{E}_2;x)\Bigr\} = \varepsilon^H(\mathcal{E}_2;x)$$ for all points $x\in X$. On the other hand, we also have, using Theorem \ref{quotient}, that $$\varepsilon^H(\mathcal{E}_2;x) \geq \varepsilon^H(\mathcal{E};x)$$ for all $x\in X$. This proves the equality in this case. 

In particular,  we have, by the above observation, that
$$\varepsilon^H(\mathcal{E}_1\oplus \mathcal{E}_2; x) =\min\Bigl\{ \varepsilon^H(\mathcal{E}_1;x),\, \varepsilon^H(\mathcal{E}_2;x)\Bigr\}$$ for all $x\in X$. 
\end{proof}
\begin{thm}\label{symmetric}
     Let $\mathcal{E}=(E,\theta)$  be a $\text{\rm H}$-nef Higgs bundle on a smooth projective variety $X$. Then for any point $x\in X$ $$\varepsilon^H(\Sym^m\mathcal{E};x) = m\,\ \varepsilon^H(\mathcal{E};x).$$
     \begin{proof}
         By Theorem \ref{thm-res-curve} and applying Lemma \ref{lem3.3}(3), we have for any point $x\in X$ 
         $$\varepsilon^H(\Sym^m\mathcal{E};x) = \inf\limits_{x\in C\subseteq X}\Bigl\{ \dfrac{\mu_{\min}^H\bigl( \nu^*(\Sym^m\mathcal{E})\bigr)}{\mult_xC} \Bigr\} = m \inf\limits_{x\in C\subseteq X}\Bigl\{ \dfrac{\mu_{\min}^H\bigl( \nu^*\mathcal{E}\bigr)}{\mult_xC} \Bigr\} = m\,\ \varepsilon^H(\mathcal{E};x).$$
     \end{proof}
\end{thm}

\begin{lem}\label{pullback}
Let $f:X\longrightarrow Y$ be a surjective morphism between two smooth projective varieties $X$ and $Y$ and let $\mathcal{E}$ be a Higgs nef Higgs bundle of rank $r$ on $Y$. Fix a point $x\in X$ and let $f(x) = y$. Then 
$$\varepsilon^H(f^*\mathcal{E};x) \geq \varepsilon^H(\mathcal{E};f(x)).$$

\begin{proof}
    Consider the following cartesian square
    \begin{center}
 \begin{tikzcd} 
 \mathcal{G}r_k(f^*\mathcal{E}) \arrow[r, "\widetilde{f}"] \arrow[d, "\widetilde{{\rho}_k}"]
& \mathcal{G}r_k(\mathcal{E}) \arrow[d,"\rho_k"]\\
X\arrow[r, "f" ]
& Y
\end{tikzcd}
\end{center}
such that $\mathcal{O}_{\mathcal{G}r_k(f^*\mathcal{E})}(1) \equiv \widetilde{f}^*\xi_k$, where $\xi_k \equiv \mathcal{O}_{\mathcal{G}r_k(\mathcal{E})}(1)$ for each $1\leq k \leq r-1.$

Recall that $$\varepsilon^H(f^*\mathcal{E};x) = \min \Bigl\{ \min\limits_{1\leq k \leq r-1} \varepsilon(\widetilde{f}^*\xi_k;x), \,\ \dfrac{1}{r}\, \varepsilon\bigl( f^*\det(E); x\bigr) \Bigr\}.$$
Note that for any $k$ with $1\leq k \leq r-1$, we have $$\varepsilon(\widetilde{f}^*\xi_k;x) \geq \varepsilon(\xi_k;y) \geq \varepsilon^H(\mathcal{E};y).$$
By the projection formula $$\dfrac{f^*\det(E)\cdot C}{\mult_xC} = \dfrac{\det(E)\cdot f_*C}{\mult_xC} \geq \dfrac{\det(E)\cdot f_*C}{\mult_{f(x)}f_*C} \geq \varepsilon(\det(E); y). $$
This shows that  $$\dfrac{1}{r}\, \varepsilon(f^*\det(E);x) \geq \dfrac{1}{r} \varepsilon(\det(E); y) \geq \varepsilon^H(\mathcal{E};y).$$
Therefore by definition, we conclude that $$\varepsilon^H(f^*\mathcal{E};x) \geq \varepsilon^H(\mathcal{E};f(x)).$$
\end{proof}
\end{lem}

\begin{lem}\label{box-prod}
    Let $\mathcal{E}$ and $\mathcal{F}$ be Higgs two Higgs nef Higgs vector bundles on smooth projective varieties $X$ and $Y$, respectively such that either $\mathcal{E}$ or $\mathcal{F}$ is Higgs ample. Consider the projection maps $p_1: X\times Y\longrightarrow X$ and $p_2:X\times Y \longrightarrow Y$. Then $p_1^*\mathcal{E} \otimes p_2^* \mathcal{F}$ is Higgs ample.
    \begin{proof}
   Without loss of generality, we assume that $\mathcal{E}$ is Higgs ample. We first show that $p_1^*\mathcal{E} \otimes p_2^* \mathcal{F}$ is Higgs nef using the Barton-Kleiman criterion \cite[Lemma 3.3]{BBG19} for Higgs nefness. Let $C$ be a smooth curve and $f:C\longrightarrow X\times Y$ be a non-constant morphism. Let $\pi_1 = p_1\circ f$ and $\pi_2 = p_2 \circ f.$ Then by the Barton-Kleiman criterion for Higgs nefness we have $$\mu_{\min}^H(\pi_1^*\mathcal{E}) \geq 0\,\ \text{and}\,\  \mu_{\min}^H(\pi_2^*\mathcal{F}) \geq 0.$$ 

   Now $$\mu_{\min}^H(f^*(p_1^*\mathcal{E}\otimes p_2^*\mathcal{F})) \geq \mu_{\min}^H(\pi_1^*\mathcal{E}) + \mu_{\min}^H(\pi_2^*\mathcal{F}) \geq  0.$$
   \vspace{3mm}   
   This shows that $p_1^*\mathcal{E} \otimes p_2^* \mathcal{F}$ is Higgs nef. To prove that $p_1^*\mathcal{E} \otimes p_2^* \mathcal{F}$ is Higgs ample, by Theorem 
   \ref{ses-criteria},
   it is now enough to show that $$\inf\limits_{(x,y)\in X \times Y} \varepsilon^H(p_1^*\mathcal{E} \otimes p_2^* \mathcal{F}; (x,y)) > 0.$$

   We have 
   \begin{align*}
       \inf\limits_{(x,y)\in X \times Y} \varepsilon^H(p_1^*\mathcal{E} \otimes p_2^* \mathcal{F}; (x,y)) & \geq \inf\limits_{(x,y)\in X \times Y} \varepsilon^H(p_1^*\mathcal{E};(x,y)) + \inf\limits_{(x,y)\in X \times Y} \varepsilon^H(p_2^*\mathcal{F};(x,y))\\
       & \geq \inf\limits_{x\in X} \varepsilon^H(\mathcal{E};x) \,\ + \,\ \inf\limits_{y \in  Y}\varepsilon^H(\mathcal{F};y)  \\
       & > 0+0 =0  \,\ \,\ (\text{as  $\mathcal{E}$ is Higgs ample and $\mathcal{F}$ is  Higgs nef}).
   \end{align*} 
    This completes the proof.
    \end{proof}
\end{lem}
\begin{xrem}\label{xrem10}
\rm Let $\mathcal{E}$ be a Higgs nef Higgs vector bundle on a smooth projective curve $C$. Then by Theorem 
\ref{thm-curves}$$\varepsilon^H(\mathcal{E};x) = \mu^H_{\min}(\mathcal{E}) $$ for every point $x\in C.$ 
%Thus for all points $x\in C$
%\begin{align}\label{s16}
%\varepsilon^H(\mathcal{E};x) \geq \dfrac{1}{\rank({\mathcal{E}})}.
%\end{align}

Now fix any positive real number $\delta$ and a positive integer $n$. Let $r$ be a positive integer such that $0 < \dfrac{1}{r} < \delta$. Note that for every $r\geq 1$, there exists a stable vector bundle $V_r$ of rank $r\geq 2$ and degree 1 on any smooth curve $C$ with genus $g(C) \geq 2$  ( see \cite[Lemma 3.1]{Hac00}). Thus the Higgs bundle $\mathcal{V}_r = (V_r,\theta)$ is Higgs stable and Higgs ample (by applying Remark \ref{5.6}) for any Higgs field $\theta.$ In this case,  $$\varepsilon^H(\mathcal{V}_r;x) = \mu^H_{\min}(\mathcal{V}_r) = \dfrac{1}{\rank(\mathcal{V}_r)}$$ for all points $x\in C$. 
 Fix a smooth irreducible projective variety $Y$ of dimension $n - 1 \geq 0$ and an ample line bundle $L$ on $Y$. Consider the product $X = C \times Y$ together with
the projection maps $$p_1:X\longrightarrow C \hspace{3mm} \text{and} \hspace{3mm} p_2:X\longrightarrow Y.$$
Let $x = (c, y) \in X$ be an arbitrary point of $X$. Then $\mathcal{E}_r := p_1^*\mathcal{V}_r\otimes p_2^*L $ is Higgs ample by Lemma \ref{box-prod}.

Now consider the smooth curve $B = C\times \{y\} \xrightarrow{i} C\times Y$. Then 
$$ \varepsilon^H(\mathcal{E}_r;x) \leq \mu^H_{\min}(i^*\mathcal{E}_r) = \mu(\mathcal{V}_r) = \dfrac{1}{\rank(\mathcal{V}_r)} = \dfrac{1}{r} < \delta. $$
This shows that for the chosen $\delta>0$, there is a smooth projective
variety $X$ of dimension $n$ and a Higgs  vector bundle $\mathcal{E}_r$  on $X$ which is Higgs ample  such that
$$ \varepsilon^H(\mathcal{E}_r;x) < \delta$$ for all points $x\in X$.

Note that the underlying vector bundle $E_r=p_1^*V_r\otimes p_2^*L$ of $\mathcal{E}_r$  is curve semistable, and thus $\mathcal{E}_r$ is curve Higgs semistable for any Higgs field $\theta$ on $V_r$. Therefore, using \cite[Lemma 3.28]{FM21} and applying Theorem \ref{5.1}, we have for any point $x\in X$ $$\dfrac{1}{r}\varepsilon(\det(E_r);x) = \varepsilon(E_r;x) = \varepsilon(\mathcal{E}_r;x) < \delta.$$
\end{xrem}
\begin{thm}\label{miranda-exm}
    (A Miranda type result)  For any positive real number $\delta$ and any positive integer $n$, there exists a smooth projective variety $X$ of dimension $n$ and a Higgs ample Higgs bundle $\mathcal{E} = (E,\theta)$ on $X$ such that $$\varepsilon(E;x) < \varepsilon^H(\mathcal{E};x) < \delta$$ for all $x\in X$.  
\begin{proof}
     Choose a positive integer $r$ such that $0 < \dfrac{1}{r} < \delta$. Let $C$ be a smooth projective curve of genus $g=2$ and let $L$ be a line bundle of degree 1 on $C$. Consider the vector bundle $E_r=L\oplus \mathcal{O}_C^{\oplus r-1}$, and $\theta : L \longrightarrow \mathcal{O}_C^{\oplus r-1} \otimes \Omega^1_C$ and $\theta : \mathcal{O}_C \longrightarrow \mathcal{O}_C\otimes \Omega_C^1$. Then the vector bundle $E_r$ is unstable as $L$ is a destabilizing line subbundle of $E_r$, but $\mathcal{E}_r=(E_r,\theta)$ is a nilpotent Higgs ample Higgs semistable Higgs bundle on $C$ by Remark \ref{5.6}.

     Let $Y$ be a smooth projective variety of dimension $n-1$ and $L$ be a very ample line bundle on $Y$. Consider the product $X=C\times Y$ and the Higgs bundle $\mathcal{V}_r = p_1^*\mathcal{E}_r\otimes p_2^*L$ on $X$. Note that $\mathcal{V}_r$ is a curve Higgs semistable bundle on $X$, but the underlying vector bundle $V_r=p_1^*E_r\otimes p_2^*L$ is nef and not ample, and it is not curve semistable.

     In this case, $$\varepsilon(V_r;(c,y)) < \varepsilon^H(\mathcal{V}_r;(c,y)) = \dfrac{1}{r}\varepsilon(\det(V_r);(c,y))$$ for every point $(c,y) \in X$.

     For any point $(c,y) \in X$, consider the smooth curve $B=C\times \{y\} \xrightarrow{i} C\times Y.$ Then 
     $$\varepsilon^H(\mathcal{V}_r;x) \leq \mu_{\min}^H(i^*\mathcal{V}_r) = \dfrac{1}{\rank(E_r)} = \dfrac{1}{r} < \delta.$$
This completes the proof. 
\end{proof}
\end{thm}
\section*{Acknowledgements}
We thank Ugo Bruzzo for useful comments on a preliminary version of this article. The first author is supported by 
a grant from Infosys Foundation and 
ANRF MATRICS grant (ANRF/ARGM/2025/002374/MTR). The second author is supported by FRS grant from IIT (ISM) Dhanbad with FRS Project No. MISC 0267. The third author is supported by  Initiation  Research Grant(IRG) from IIIT-Delhi.

\end{document}